\let\TeXchi\chi
\newbox\chibox
\chibox \hbox{\raise\dp0 \box 0 }
\def\chi{\copy\chibox}
\newtheorem{proposition}{Proposition}[section]
\newtheorem{theorem}{Theorem}[section]
\newtheorem{definition}{Definition}[section]
\newtheorem{lemma}{Lemma}[section]
\newtheorem{corollary}{Corollary}[section]
\newtheorem{remark}{Remark}[section]
\newtheorem{conjecture}{Conjecture}[section]
\newtheorem{problem}{Problem}[section]
\numberwithin{equation}{section}
\numberwithin{theorem}{section}
\numberwithin{definition}{section}
\numberwithin{example}{section}
\numberwithin{proposition}{section}
\numberwithin{lemma}{section}
\numberwithin{remark}{section}
\DeclareMathOperator{\Ima}{Im}
\DeclareMathOperator{\Id}{Id}
\DeclareMathOperator{\rk}{rank}
\DeclareMathOperator{\coker}{coker}
\DeclareMathOperator{\ind}{ind}
\newcommand\blfootnote[1]{%
  \begingroup
  \renewcommand\thefootnote{}\footnote{#1}%
  \addtocounter{footnote}{-1}%
  \endgroup
}
\begin{document}
\title{A conditional proof of the invariant subspace problem for quasinilpotent operators}
\author
{Manuel Norman}
\date{}
\maketitle
\begin{abstract}
\noindent The invariant subspace problem (ISP) is a well known unsolved problem in funtional analysis. While many partial results are known, the general case for complex, infinite dimensional separable Hilbert spaces is still open. It has been shown that the problem can be reduced to the case of operators which are norm limits of nilpotents. One of the most important subcases is the one of quasinilpotent operators, for which the problem has been extensively studied for many years. In this paper, we will introduce a new conjecture (supported by a heuristic argument), and we will prove conditionally that every quasinilpotent operator has a nontrivial invariant subspace. We will conclude by posing an open problem which would have deep implications regarding the ISP.
\end{abstract}
\blfootnote{Author: \textbf{Manuel Norman}; email: manuel.norman02@gmail.com\\
\textbf{AMS Subject Classification (2020)}: 47A15\\
\textbf{Key Words}: invariant subspace problem, quasinilpotent operators}
\section{Introduction}
The invariant subspace problem is one of the most important unsolved problems in functional analsysis. It asks whether every bounded linear operator $T \in B(X)$ ($X$ Banach space) has a nontrivial invariant subspace, i.e. a closed \footnote{Throughout this paper, we always tacitly assume that the subspaces we are talking about are closed.} linear subspace $W$ different from $X$ and $\lbrace 0 \rbrace$ such that $T(W) \subseteq W$. The finite dimensional case (with $\dim X \geq 2$) and the non separable (infinite dimensional) one are easy to settle. Enflo and Read were the first ones to construct counterexamples in the general setting. However, for reflexive Banach spaces (and in particular for Hilbert spaces) the problem is still open. We also note that the problem is open even in the real case, but here we will only deal with complex spaces.\\
A long-standing important subproblem of the ISP (which stands for invariant subspace problem) is the ISP for quasinilpotent operators, i.e. operators $T$ such that $\sigma(T)=\lbrace 0 \rbrace$. It is clear that solving the ISP for quasinilpotent operators also establishes the problem for every operator whose spectrum is a singleton: indeed, let $T$ be such that $\sigma(T)=\lbrace \lambda \rbrace$. Then, $T- \lambda \Id$ is quasinilpotent. Consequently, if we can show that quasinilpotent operators have some nontrivial invariant subspace $W$, we can say that, if $x \in W$, then $Tx- \lambda x = y \in W$. But then, $Tx=y + \lambda x \in W$. Thus, whenever $x \in W$, $Tx \in W$, so that $W$ is also a nontrivial invariant subspace for $T$. This subproblem has been subject of research in the last half century; we refer to the papers [30] and [7-8, 10, 18, 20] for some results.\\
In the next section we will recall some notions and some results that will be used in the proof of our main Theorem, which is shown in Section 4. The restrictions of the ISP obtained up to now and some consequences of our main Theorem are discussed in Section 3.\\
For more details on the ISP, we refer to [2-3, 26, 27]. Throughout the paper, $H$ will always denote a complex, infinite dimensional, separable Hilbert space.
\section{Preliminaries}
The results in the next sections involve some notions of (essential) spectra, which we recall here. First of all, the spectrum of an operator is defined as:
\begin{equation}\label{Eq:2.1}
\sigma(T):= \lbrace \lambda \in \mathbb{C}: T- \lambda \Id \text{is not invertible} \rbrace .
\end{equation} 
It is well know that this spectrum can be written as the disjoint union of three other spectra, namely the point spectrum $\sigma_p(T)$, which consists of the eigenvalues of $T$, the continuous spectrum $\sigma_c(T)$, consisting of the $\lambda$'s such that $T - \lambda \Id$ is injective, has dense range but is not surjective, and the residual spectrum $\sigma_r(T)$, which contains the $\lambda$'s such that $T-\lambda \Id$ is injective but does not have dense range. Another important kind of spectrum is the approximate point spectrum $\sigma_a(T)$, which contains the approximate eigenvalues of $T$. Approximate eigenvalues can also be characterised as those $\lambda$'s for which $T- \lambda \Id$ is not bounded below. It is clear that:
\begin{equation}\label{Eq:2.2}
\sigma_c(T)=\sigma_a(T) \setminus (\sigma_p(T) \cup \sigma_r(T)) .
\end{equation}
In order to introduce some essential spectra of $T$, we need the following definition:
\begin{definition}\label{Def:2.1}
An operator $T \in B(H)$ is Fredholm if its kernel and cokernel are finite-dimensional and its range is closed. This is equivalent to the fact that $T$ has a closed range and both $\dim \ker T$ and $ \dim \ker T^* $ are finite (see, for instance, [6, Exercise 8] or [23, Corollary 2.2]). The set of Fredholm operators is denoted by $\Phi$. The index of a Fredholm operator is defined by:
$$ \ind(T):=\dim \ker T - \dim \coker T = \dim \ker T - \dim \ker T^* . $$
The set of Fredholm operators with index $0$ is denoted by $\Phi_0$.
\end{definition}
The Fredholm spectrum $\sigma_{\Phi}(T)$ is defined as follows:
\begin{equation}\label{Eq:2.3}
\sigma_{\Phi}(T):= \lbrace \lambda \in \mathbb{C}: T- \lambda \Id \not \in \Phi \rbrace
\end{equation}
Moreover, the Weyl spectrum $\sigma_w(T)$ is:
\begin{equation}\label{Eq:2.4}
\sigma_{w}(T):= \lbrace \lambda \in \mathbb{C}: T- \lambda \Id \not \in \Phi_0 \rbrace
\end{equation}
The Weyl spectrum is invariant under compact perturbation, i.e. $\sigma_w(T)=\sigma_w(T+K)$ for every compact $K$. This easily follows from the following characterisation discovered by Schechter (see [29]):
\begin{equation}\label{Eq:2.5}
\sigma_{w}(T) = \bigcap_{K \in \mathcal{K}(H)} \sigma(T+K)
\end{equation}
where $\mathcal{K}(H)$ is the ideal of compact operators. A relation between the Fredholm spectrum, the Weyl spectrum and the spectrum of a bounded operator is given by the chain of inclusions:
$$ \sigma_{\Phi}(T) \subseteq \sigma_w(T) \subseteq \sigma(T) $$
Furthermore, it is proved in [6, Exercise 7] or [23, Remark 4.3] that the Fredholm spectrum is always nonempty when $H$ is, as in our case, infinite dimensional. When $T$ is quasinilpotent, this implies that $\sigma_{\Phi}(T) = \sigma_w(T) = \sigma(T) = \lbrace 0 \rbrace$. For more on these topics, we refer to [28].\\
Another important notion which is related to the spectrum is the one of pseudospectrum. For more information on this subject, we refer the reader to [11] for the case of square matrices and [31, Chapter 1, Section 4] for the case of linear operators acting on an infinite-dimensional space. We recall that, for an operator $T \in B(H)$ and for a real number $\epsilon >0$, the $\epsilon$-pseudospectrum is defined as:
\begin{equation}\label{Eq:2.6}
\sigma_{\epsilon}(T)= \lbrace z \in \mathbb{C} : \| R_{T}(z) \| > 1/ \epsilon \rbrace = \bigcup_{E: \, \, \| E \| < \epsilon} \sigma(T+E)
\end{equation}
where $R_T(z)$ is the resolvent operator defined on the resolvent set $\rho(T)= \mathbb{C} \setminus \sigma(T)$. We will now recall some important results which will be used in our proof. In [30], Tcaciuc proved that:
\begin{proposition}[30, Theorem 2.3]\label{Prop:2.1}
Let $T \in B(H)$ be quasinilpotent. Then, the following are equivalent:\\
(i) $T$ has a nontrivial invariant subspace;\\
(ii) there exists a rank $1$ operator $F$ such that $T + \alpha F$ is quasinilpotent for all $\alpha \in \mathbb{C}$;\\
(iii) there exists a rank $1$ operator $F$ such that $T + \alpha F$ is quasinilpotent for $\alpha = 1$ and for some other $\alpha \neq 0, 1$.
\end{proposition}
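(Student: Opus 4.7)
My plan is to prove $\text{(i)} \Leftrightarrow \text{(ii)} \Leftrightarrow \text{(iii)}$; since $\text{(ii)} \Rightarrow \text{(iii)}$ is immediate, only three directions require work. The central device in all three is the rank $1$ perturbation identity: for $F = x \otimes y^*$ (meaning $Fh = \langle h, y\rangle x$, with $x,y \neq 0$) and any $z \notin \sigma(T) = \{0\}$, the factorisation
\[
T + \alpha F - z\Id \;=\; (T - z\Id)\bigl(\Id + \alpha(T-z\Id)^{-1}F\bigr)
\]
combined with the Sherman--Morrison formula yields that $T + \alpha F - z\Id$ is invertible iff $1 + \alpha f(z) \neq 0$, where $f(z) := \langle (T - z\Id)^{-1}x, y\rangle$. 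Quasinilpotence of $T$ makes $f$ holomorphic on $\mathbb{C}\setminus\{0\}$ with $f(\infty) = 0$ and Laurent expansion $f(z) = -\sum_{n \geq 0} \langle T^n x, y\rangle / z^{n+1}$.

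For $\text{(i)} \Rightarrow \text{(ii)}$ I would use a nontrivial invariant subspace $W$ to pick nonzero $x \in W$ and $y \in W^\perp$, and set $F = x \otimes y^*$. In the decomposition $H = W \oplus W^\perp$, $T$ is upper triangular with diagonal blocks $T_1, T_2$ (both quasinilpotent, since by the Gelfand formula $r(T_i) \leq r(T) = 0$), while $F$ has the form $\begin{pmatrix} 0 & F_{12} \\ 0 & 0 \end{pmatrix}$. Hence $T + \alpha F$ stays upper triangular with the same diagonal blocks, giving $\sigma(T + \alpha F) \subseteq \sigma(T_1) \cup \sigma(T_2) = \{0\}$ for every $\alpha \in \mathbb{C}$.

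For $\text{(iii)} \Rightarrow \text{(ii)}$ the hypothesis forces $1 + f(z) \neq 0$ and $1 + \alpha_0 f(z) \neq 0$ throughout $\mathbb{C}\setminus\{0\}$ for some $\alpha_0 \neq 0,1$; equivalently, $f$ omits the two distinct finite values $-1$ and $-1/\alpha_0$. The great Picard theorem then rules out an essential singularity of $f$ at $0$, so $f$ extends meromorphically to $\mathbb{C}$ with at most a pole at $0$, and $f(\infty) = 0$ forces $f$ to be a polynomial in $1/z$ without constant term. Any nonzero such polynomial is surjective onto $\mathbb{C}$ (apply the fundamental theorem of algebra to $w \mapsto f(1/w)$), contradicting the omission of $-1$. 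Hence $f \equiv 0$, and the perturbation identity yields (ii). Finally, $\text{(ii)} \Rightarrow \text{(i)}$ is then immediate: matching Laurent coefficients in $f \equiv 0$ gives $\langle T^n x, y\rangle = 0$ for every $n \geq 0$, so $W := \overline{\mathrm{span}}\{T^n x : n \geq 0\}$ is a $T$-invariant subspace contained in $\{y\}^\perp$; it is nontrivial because $0 \neq x \in W$ and $y \neq 0$.

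The main obstacle I anticipate is the complex-analytic step in $\text{(iii)} \Rightarrow \text{(ii)}$: the argument uses crucially that $f$ is holomorphic on the whole punctured plane, with $0$ as its only possible singularity, and this is exactly where quasinilpotence of $T$ enters; it also explains why the hypothesis in (iii) demands two distinct nontrivial values of $\alpha$, producing the pair of omitted values that Picard needs. Both features are specific to the quasinilpotent setting and do not obviously survive in greater generality.
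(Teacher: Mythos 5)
Your argument is correct, but note that the paper does not prove this statement at all: it is imported verbatim as [30, Theorem 2.3] (Tcaciuc), so there is no in-paper proof to compare against. What you have written is essentially a reconstruction of Tcaciuc's own argument, built on the same device the paper later quotes as Lemma 2.1 of [30] (the function $g(z)=\langle R_T(z)x,y\rangle$ detecting nonzero eigenvalues of $T+\alpha F$ via $g(z)=\alpha^{-1}$), with the block-triangular computation for (i)$\Rightarrow$(ii) and the great Picard theorem forcing $g\equiv 0$ in the hard direction; the only cosmetic caveat is that your closing step is really (iii)$\Rightarrow$(i) (the cycle (i)$\Rightarrow$(ii)$\Rightarrow$(iii)$\Rightarrow$(i) closes the equivalence), not literally (ii)$\Rightarrow$(i) as labelled.
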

We will make use of this really useful characterisation together with the next proposition:
\begin{proposition}[30, Proposition 2.4]\label{Prop:2.2}
Let $T \in B(H)$ be quasinilpotent and $F$ be a rank $1$ operator. Then, exactly one of the following three possibilities happens:\\
(i) $T+ \alpha F$ is quasinilpotent for all $\alpha \in \mathbb{C}$;\\
(ii) for all nonzero $\alpha \in \mathbb{C}$, with possibly one exception, $\sigma_p(T + \alpha F)$ is countably infinite;\\
(iii) there is some natural number $K$ such that for all nonzero $\alpha \in \mathbb{C}$, $0 < |\sigma_p(T + \alpha F) \setminus \lbrace 0 \rbrace| < K$.
\end{proposition}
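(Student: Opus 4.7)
My plan is to translate the eigenvalue condition for the family $\{T + \alpha F\}$ into the zero set of a single scalar-valued entire function of $1/\lambda$, and then to read off the trichotomy from the nature of that function: identically zero, non-constant polynomial, or transcendental.

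First, I would fix $y, z \in H$ such that $F = \langle \cdot, y\rangle z$. Since $T$ is quasinilpotent, $T - \lambda \Id$ is invertible for every $\lambda \neq 0$, and we have the rank-one factorisation
\[
T + \alpha F - \lambda \Id = (T - \lambda \Id)\bigl(\Id + \alpha (T - \lambda \Id)^{-1} F\bigr).
\]
The second factor is $\Id$ plus a rank-one operator, so it fails to be invertible precisely when $1 + \alpha \langle (T - \lambda \Id)^{-1} z, y\rangle = 0$. Moreover $\sigma_{\Phi}(T) = \{0\}$ and the Fredholm spectrum is stable under finite-rank perturbation, hence $T + \alpha F - \lambda \Id$ is Fredholm of index $0$ for every $\lambda \neq 0$, and non-invertibility at such a $\lambda$ forces $\lambda \in \sigma_p(T + \alpha F)$. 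Setting $f(\lambda) := \langle (T - \lambda \Id)^{-1} z, y\rangle$ on $\mathbb{C}\setminus\{0\}$ gives
\[
\sigma_p(T + \alpha F) \setminus \{0\} = \bigl\{\lambda \in \mathbb{C} \setminus \{0\} : f(\lambda) = -1/\alpha\bigr\}.
\]

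Next, I would invoke quasinilpotence a second time to reveal the global structure of $f$. The Neumann series $(T - \lambda \Id)^{-1} = -\lambda^{-1} \sum_{n \geq 0} T^n \lambda^{-n}$ converges for every $\lambda \neq 0$ because $\|T^n\|^{1/n} \to 0$, so $f(\lambda) = -\sum_{n \geq 0} \langle T^n z, y\rangle \lambda^{-(n+1)}$. The substitution $w = 1/\lambda$ therefore turns $f$ into
\[
g(w) := f(1/w) = -\sum_{n \geq 0} \langle T^n z, y\rangle \, w^{n+1},
\]
which is an entire function with $g(0) = 0$. In these coordinates, the non-zero eigenvalues of $T + \alpha F$ are in bijection with the solutions in $\mathbb{C}\setminus\{0\}$ of $g(w) = -1/\alpha$.

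Finally, I would split into three cases according to the type of $g$. If $g \equiv 0$, then $f \equiv 0$ and $\sigma(T + \alpha F) \subseteq \{0\}$ for every $\alpha$, giving case (i). If $g$ is a non-zero polynomial of some degree $n \geq 1$, then $g(w) + 1/\alpha$ has between $1$ and $n$ distinct roots for every $\alpha \neq 0$ by the fundamental theorem of algebra, all non-zero because $g(0) = 0 \neq -1/\alpha$, giving case (iii) with $K := n+1$. If $g$ is transcendental entire, the great Picard theorem applied at the essential singularity $\infty$ supplies at most one $c_0 \in \mathbb{C}$ for which $g = c_0$ has only finitely many solutions; for every other $\alpha \neq 0$ the set $\{w : g(w) = -1/\alpha\}$ is infinite, discrete by the identity theorem, and avoids $0$, so $\sigma_p(T + \alpha F)$ is countably infinite, the only possible exceptional $\alpha$ being $-1/c_0$ (or none if $c_0 = 0$); this is case (ii). The three cases are automatically mutually exclusive because any entire function is either identically zero, a non-constant polynomial, or transcendental. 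The critical step, and the one I expect to demand the most care, is the extension of $f$ to an entire function of $1/\lambda$: it rests essentially on $\|T^n\|^{1/n} \to 0$ and is exactly what allows Picard's dichotomy "polynomial vs.\ transcendental" to transfer into the dichotomy between cases (iii) and (ii).
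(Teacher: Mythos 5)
Your argument is correct: the rank-one factorisation $T+\alpha F-\lambda\Id=(T-\lambda\Id)(\Id+\alpha(T-\lambda\Id)^{-1}F)$, the reduction to the scalar equation $f(\lambda)=-1/\alpha$, the passage to the entire function $g(w)=f(1/w)$ via the Neumann series (using $\|T^n\|^{1/n}\to 0$), and the trichotomy zero/polynomial/transcendental settled by the fundamental theorem of algebra and the great Picard theorem all hold up. The paper itself states this proposition without proof, citing [30]; your reconstruction is essentially Tcaciuc's own argument (his Lemma 2.1 is exactly your eigenvalue characterisation), so there is nothing further to compare.
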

A fundamental ingredient of our proof is the following conjecture of Herrero's, which has been proved in [14-15] by Jiang and Ji.
\begin{proposition}[Herrero-Jiang-Ji Theorem]\label{Prop:2.3}
Let $T \in B(H)$ have a connected spectrum. Then, for every $\epsilon >0$, there exists a compact $K$ with $\| K \| < \epsilon$ such that $T=K+S$, where $S$ is strongly irreducible.
\end{proposition}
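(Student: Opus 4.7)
The plan is to leverage the structure theory of Hilbert space operators modulo compact perturbations, in the spirit of the Apostol--Foia\c{s}--Voiculescu--Herrero program. First I would use Voiculescu's non-commutative Weyl--von Neumann theorem, together with Apostol--Herrero type approximation results, to reduce $T$ (up to a compact perturbation of arbitrarily small norm) to an operator in a convenient normal form: typically a block operator whose diagonal entries are Cowen--Douglas operators, or direct sums of normal and nilpotent pieces with prescribed spectral data, and whose off-diagonal entries are compact and small. The spectrum of this approximant stays close to $\sigma(T)$, so the connectedness hypothesis is preserved (or can be restored by one more tiny perturbation).

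Second, the problem becomes a density statement: within the class of operators with connected spectrum, strongly irreducible operators should be dense in the topology generated by compact perturbations of norm less than $\epsilon$. The connected spectrum hypothesis enters crucially: if $\sigma(T)$ were disconnected, the Riesz projections associated to the spectral components would produce nontrivial idempotents in $\{T\}'$, which is exactly the obstruction to strong irreducibility. Once $\sigma(T)$ is connected, one can hope to kill any remaining idempotent $P \in \{T_0\}'$ commuting with the approximant by a further small compact perturbation that is inconsistent with the $2 \times 2$ block decomposition of $T_0$ induced by the splitting $H = \operatorname{Im} P \oplus \ker P$.

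The technical heart is to carry out this ``killing of idempotents'' uniformly and iteratively. One enumerates a countable family of potential obstructions (typically by restricting attention to idempotents whose range is approximately spanned by finitely many vectors from a fixed dense sequence) and, at stage $n$, adds a compact perturbation of norm at most $\epsilon/2^{n+1}$ that destroys the $n$th obstruction while preserving those destroyed at earlier stages. Connectedness of $\sigma(T)$ is what makes the individual perturbations compatible: it prevents exploitation of spectral ``gaps'' that would allow a block decomposition to persist, so perturbing at one stage does not reintroduce idempotents previously eliminated. Proper execution of this Cauchy scheme -- as in Jiang--Ji -- yields a limit operator $S$ that differs from $T$ by a compact operator of norm less than $\epsilon$ and has no nontrivial idempotents in its commutant.

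The main obstacle I expect is the simultaneous control of three competing requirements: the perturbation must be compact, its total norm must not exceed $\epsilon$, and it must destroy every potential commuting idempotent. Since there are uncountably many candidate idempotents, the hard step is showing that only countably many ``types'' of obstruction really matter -- reducing the continuum of idempotents in $\{T_0\}'$ to a manageable countable dense subfamily via a spectral/compactness argument -- and then verifying that the iterative scheme actually converges to a strongly irreducible operator rather than merely to one that is approximately irreducible in some weaker sense. This is the genuinely delicate analytic core of the Herrero--Jiang--Ji theorem.
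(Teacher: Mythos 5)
The paper does not prove this statement at all: it is quoted as the Herrero--Jiang--Ji theorem and attributed to [14--15], so there is no internal argument to compare yours against. Judged on its own terms, your sketch correctly identifies the opening move of the real proof (the Apostol--Foia\c{s}--Herrero--Voiculescu approximation machinery, reduction modulo a small compact to a normal form built from Cowen--Douglas-type blocks, with connectedness of $\sigma(T)$ being exactly what rules out the Riesz-projection obstruction), but the core of your plan --- enumerating a countable family of ``obstruction'' idempotents in the commutant and killing them one at a time with perturbations of norm $\epsilon/2^{n+1}$ --- is not a proof and, as described, does not work. The set of idempotents in $\{T_0\}'$ is uncountable and there is no given mechanism reducing it to countably many relevant types; more seriously, even if each stage destroys one idempotent, the norm limit of the resulting Cauchy sequence can perfectly well acquire new nontrivial idempotents in its commutant (strong irreducibility is not a property preserved under, or detected by, such limits --- the strongly irreducible operators are not closed in norm, and ``approximately irreducible'' does not imply irreducible). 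You flag this yourself as the delicate core, but flagging it is not the same as bridging it.

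The actual Jiang--Ji argument avoids the iteration entirely: after the AFHV-type reduction one shows that $T$ is a small compact perturbation of an operator \emph{similar} to an explicit model operator (built from Cowen--Douglas operators $B_n(\Omega)$ over connected domains $\Omega$), and the strong irreducibility of that model is verified \emph{directly} from the structure of its commutant --- connectedness of $\Omega$ forces any commuting idempotent to be $0$ or $I$ via an analytic continuation/holomorphic vector bundle argument, not via a perturbative exhaustion. So the genuine gap in your proposal is the absence of any concrete strongly irreducible target operator and of a direct (non-iterative) verification of its strong irreducibility; without that, the scheme only produces operators that fail to commute with finitely many chosen idempotents, which is strictly weaker than the conclusion.
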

An operator is strongly irreducible if $ATA^{-1}$ is irreducible for every invertible operator $A$, otherwise it is strongly reducible. This is Definition 2.23 in [17]; we refer to this book for more details and other equivalent characterisations of these operators.\\
In our proof we will actually make use of a special case of the above result, which gives us some information about $S$. This is Lemma 2.10 in [14] applied to the case $\lambda = 0$.
\begin{proposition}[Herrero-Jiang-Ji Theorem for quasinilpotent operators]\label{Prop:2.4}
Let $T \in B(H)$ be quasinilpotent.  Then, for every $\epsilon >0$, there exist a compact $K$ with $\| K \| < \epsilon$ such that:\\
(i) $T=K+S$ for some strongly irreducible $S$;\\
(ii) $\dim \ker S =2$ and 
$$\ker S \subseteq \bigcap_{n \geq 1} \Ima (S^n)$$
(iii) $\overline{\operatorname{span}} \lbrace ker(S^n), n \geq 1 \rbrace = H$;\\
(iv) $\sigma_p(S^*)= \emptyset$.
\end{proposition}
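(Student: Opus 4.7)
The plan is to bootstrap from the general Herrero-Jiang-Ji Theorem (Proposition \ref{Prop:2.3}) and then refine the decomposition by a further small compact perturbation to secure the extra structural data. Since $\sigma(T)=\{0\}$ is trivially connected, Proposition \ref{Prop:2.3} applied with tolerance $\epsilon/2$ yields a compact $K_0$ with $\|K_0\|<\epsilon/2$ and a strongly irreducible $S_0$ such that $T=K_0+S_0$. Because the Weyl spectrum is invariant under compact perturbations, $\sigma_w(S_0)=\sigma_w(T)=\{0\}$, and since a strongly irreducible operator has connected spectrum, the only possible spectral point is $0$; hence $S_0$ is again quasinilpotent. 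The task is then to replace $S_0$ by $S=S_0+K_1$ for some compact $K_1$ of norm $<\epsilon/2$ so that $S$ still lies in the strongly irreducible orbit while satisfying (ii)--(iv).

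The second step would lean on the biquasitriangularity of quasinilpotent operators: since $\sigma(S_0)=\sigma_w(S_0)$, the Apostol-Foias-Voiculescu theorem guarantees that $S_0$ is a norm-limit, modulo compacts, of nilpotents. Within this biquasitriangular regime I would use a two-chain ``Jordan model'': an orthogonal sum of two infinite weighted backward-shift chains with carefully chosen weights, coupled by a small off-diagonal compact term. The two independent chains give exactly a two-dimensional kernel; the chain structure forces each kernel vector to lie in $\Ima(S^n)$ for every $n$, which is condition (ii); the union of the iterated kernels sweeps out both chains, so $\overline{\operatorname{span}}\{\ker(S^n):n\geq 1\}=H$, giving (iii). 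Condition (iv) asserts that $S-\lambda \Id$ has dense range for every $\lambda\in\mathbb{C}$; outside $0$ this is automatic from $\sigma_w(S)=\{0\}$ combined with strong irreducibility (no Fredholm eigenvalue of $S^*$ can appear), and at $\lambda=0$ it can be arranged generically by perturbing the off-diagonal coupling so that $\Ima(S)$ is dense.

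The main obstacle, and the genuine content of the proposition, is enforcing (i)--(iv) \emph{simultaneously}. Each of these conditions is individually a density statement in the Jiang-Ji approximation framework developed in [17], but strong irreducibility is a fragile property, so the perturbations used to produce the two-dimensional kernel, the filtration inclusion, the spanning of iterated kernels, and the triviality of $\sigma_p(S^*)$ must all be performed within the strongly irreducible part of the compact-perturbation orbit of $T$. The cleanest way to handle this would be to introduce a finite-parameter family of compact perturbations of the chain model (weights and coupling amplitudes) and to show, via a Baire-category style argument, that the subset of the parameter space for which all four conditions hold is dense near the origin. This is exactly the content of Lemma 2.10 of [14] at $\lambda=0$; the quasinilpotent setting simplifies the bookkeeping considerably because there is only one spectral point to manage, but the tight coupling of strong irreducibility with the kernel/range data remains the technical heart of the argument.
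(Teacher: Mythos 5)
Your proposal is not a proof but a plan, and the paper itself does not attempt one either: Proposition \ref{Prop:2.4} is justified in the text solely by the citation ``this is Lemma 2.10 in [14] applied to the case $\lambda=0$,'' which is exactly where your own sketch ends up. The difficulty is that everything you would need to actually establish is asserted rather than carried out. Your two-stage strategy (first apply Proposition \ref{Prop:2.3} to get $T=K_0+S_0$ with $S_0$ strongly irreducible, then add a second small compact $K_1$ to force (ii)--(iv)) runs into the problem you yourself flag: strong irreducibility is not stable under small compact perturbations, and the set of strongly irreducible operators is dense but not open in the class of operators with connected spectrum, so there is no reason the corrected operator $S_0+K_1$ remains strongly irreducible. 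The ``two-chain Jordan model'' is never constructed: you do not exhibit the weights, do not verify that the coupled sum satisfies $\dim\ker S=2$, $\ker S\subseteq\bigcap_n\Ima(S^n)$, $\overline{\operatorname{span}}\{\ker(S^n)\}=H$ and $\sigma_p(S^*)=\emptyset$ simultaneously, do not show it is strongly irreducible (an orthogonal direct sum of two chains is reducible, so everything hinges on the unquantified off-diagonal coupling), and do not show it lies within $\epsilon/2$ of $S_0$. The Baire-category step is likewise only gestured at. In short, the ``technical heart'' you correctly identify is precisely the part that is missing.

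Two smaller points. First, the appeal to the Apostol--Foias--Voiculescu theorem is a detour: biquasitriangularity and membership in $\mathcal{N}^-$ are not used anywhere in the rest of your argument, and the Jiang--Ji machinery does not route through them. Second, your treatment of (iv) away from $0$ can be simplified: since $S$ is quasinilpotent, $S-\lambda\Id$ is invertible for every $\lambda\neq 0$, so $\sigma_p(S^*)\subseteq\{0\}$ automatically; the only genuine content of (iv) is the density of $\Ima S$, which again is exactly the kind of condition that must be built into the model rather than ``arranged generically'' after the fact. Given that the statement is a quoted result from [14], the honest options are either to cite it as the paper does, or to reproduce the actual construction of Lemma 2.10 there; the intermediate sketch you give does not close the gap.
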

It will be important in our proof to note that every nonempty set $A$ which is either finite (but not a singleton) or countably infinite is disconnected in $\mathbb{C} \cong \mathbb{R}^2$. This well known result can be intuitively seen to be true; we will not give a proof of this here.\\
The last result needed in our proof is the upper semicontinuity of the separated parts of the spectrum (the statement below follows from Theorem 3.16 in Chapter 4 of [19] and Theorem 2.14 in the same chapter):
\begin{proposition}\label{Prop:2.5}
Let $A \in B(H)$ be such that $\sigma(A)$ is disconnected. Let $\Gamma$ be a closed, simple, rectifiable curve separating $\sigma(A)$. Then, there exists $\delta >0$ such that, whenever $B \in B(H)$ is such that $\| B \| < \delta$, $\sigma(A+B)$ is still disconnected.
\end{proposition}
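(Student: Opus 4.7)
The plan is to exploit the Riesz holomorphic functional calculus: the curve $\Gamma$ gives rise to a spectral projection for $A$, and I would show that this projection varies continuously under small perturbations, forcing the perturbed spectrum to remain split.

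First I would use $\Gamma$ to partition $\sigma(A) = \sigma_1 \sqcup \sigma_2$, where $\sigma_1$ lies in the bounded component of $\mathbb{C} \setminus \Gamma$ and $\sigma_2$ in the unbounded one; both are nonempty by the separating hypothesis. Since $\Gamma$ is a compact set disjoint from $\sigma(A)$, the resolvent $z \mapsto R_A(z)$ is continuous on $\Gamma$, so $M := \sup_{z \in \Gamma} \| R_A(z) \| < \infty$. Then for any $B \in B(H)$ with $\| B \| < 1/M$ and any $z \in \Gamma$, the factorisation $A + B - z\Id = (A-z\Id)(\Id + R_A(z) B)$ together with the Neumann series gives invertibility of $A+B-z\Id$, so $\Gamma \cap \sigma(A+B) = \emptyset$. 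Moreover, on $\Gamma$ one has the uniform estimate
$$R_{A+B}(z) = (\Id + R_A(z)B)^{-1} R_A(z),$$
so $R_{A+B}(z) \to R_A(z)$ uniformly on $\Gamma$ as $\| B \| \to 0$.

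Next I would introduce the Riesz projections
$$P_A := \frac{1}{2 \pi i} \oint_{\Gamma} R_A(z)\, dz, \qquad P_{A+B} := \frac{1}{2 \pi i} \oint_{\Gamma} R_{A+B}(z)\, dz.$$
Both are idempotents commuting with the corresponding operator. Because $\sigma_1$ and $\sigma_2$ are each nonempty, $P_A$ is neither $0$ nor $\Id$ (its range is the spectral subspace associated to $\sigma_1$, which is nontrivial and proper since $\sigma_2 \neq \emptyset$). The uniform convergence on $\Gamma$ gives $\| P_{A+B} - P_A \| \to 0$ as $\| B \| \to 0$.

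Now I would use the standard rigidity of projections: any nonzero idempotent $P$ in $B(H)$ satisfies $\| P \| \geq 1$, and likewise $\| \Id - P \| \geq 1$ unless $P = \Id$. Since $P_A \neq 0$ and $P_A \neq \Id$, for $\| B \|$ sufficiently small we have $\| P_{A+B} - P_A \| < 1/2$, forcing $P_{A+B} \neq 0$ and $P_{A+B} \neq \Id$. This means that $\sigma(A+B)$ has nontrivial intersection with both the inside and the outside of $\Gamma$, so $\Gamma$ still separates $\sigma(A+B)$ and the latter is disconnected. Choosing $\delta$ to be the minimum of $1/M$ and the threshold coming from the projection estimate completes the proof. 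The only mildly delicate step is the uniformity of the resolvent convergence on $\Gamma$, but this is immediate from the Neumann series bound, so no step here is genuinely difficult.
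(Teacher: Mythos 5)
Your proof is correct and is essentially the argument the paper delegates to Kato [19]: the Neumann-series factorisation keeps $\Gamma$ inside the resolvent set of $A+B$, and the continuity of the Riesz projection, combined with the fact that a nonzero idempotent has norm at least $1$, keeps both separated parts of $\sigma(A+B)$ nonempty. The only (immaterial) difference is that your quantitative estimate on $\| P_{A+B}-P_A \|$ may force a $\delta$ smaller than the value $\min_{\xi \in \Gamma} \| R_A(\xi) \|^{-1}$ asserted in Remark \ref{Rm:2.1} --- Kato avoids this loss by noting that $P_{A+tB}$ varies continuously in $t\in[0,1]$ and a continuous path of idempotents cannot jump between similarity classes, so $P_{A+B}$ is similar to $P_A$ for every $\| B \|$ below that threshold --- but since the proposition only claims the existence of some $\delta>0$, your argument fully suffices.
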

\begin{remark}\label{Rm:2.1}
\normalfont In [19], some expressions for $\delta$ are given. In particular, we notice that the following one can be used:
$$ \delta= \min_{\xi \in \Gamma} \frac{1}{\| R_{A} (\xi) \|}$$
Indeed, the proof of the upper semicontinuity of the spectrum given in [19] consists of three main steps \footnote{If not explicitely stated otherwise, all the results to which we refer in this Remark are given in Chapter 4 of [19].}: firstly, it is taken a value of $\delta>0$ such that the curve $\Gamma$ satisfies $\Gamma \subset P(B)$ (the resolvent set of $B$), and then Lemma 4.10 in Chapter I of [19] is applied (notice that the hypothesis that the projection depends \textit{continuously} on the parameter is satisfied by Theorem 3.15). From this we get Theorem 3.16, which gives us Proposition \ref{Prop:2.5} above via Theorem 2.14. Indeed, if $\| B \| < \delta$, then $\widehat{\delta}(A+B,A) \leq \| B \| < \delta$, so that Theorem 3.16 applies and leads to the desidered result. Now, $\Gamma \subset P(B)$ certainly holds under some assumptions. In the proof given in [19], the general case is considered: given two \textit{closed} operators $A$, $B$, and a curve $\Gamma$ as above, by the proof of Theorem 3.1 we may take
$$ \delta=\min_{\xi \in \Gamma} \frac{1}{2} \frac{1}{1+ |\xi|^2} \frac{1}{\sqrt{1 + \| R_{A} (\xi)\|^2}} $$
But in our case, we can be less general and get something easier to deal with. By Remark 3.2, if $B$ is a \textit{bounded} operator (as in our case), then we can take:
$$ \delta= \min_{\xi \in \Gamma} \frac{1}{\| R_{A} (\xi) \|}$$
\end{remark}
We conclude this section by stating the following conjecture, which is inspired by the results contained in [11] (in particular, by Theorem 6.1).
\begin{conjecture}\label{Conj:2.1}
Let $T$ be quasinilpotent. Then, there exist $a,b,t >0$ such that, for every rank $1$ quasinilpotent operator $F$ with $\| F \|=b$ for which $\widetilde{T}^* + \alpha F$ is not quasinilpotent for at least two distinct complex $\alpha$'s, there is some subset $S \subseteq \mathbb{C} \setminus \lbrace 0 \rbrace$ with $|S| \geq 2$ (which may depend on $F$) satisfying the following properties:\\
(i) $\widetilde{T}^* + \alpha F$ is not quasinilpotent for all $\alpha \in S$;\\
(ii) for all $\alpha \in S$, the following inclusion holds
$$ \sigma_{t}(\widetilde{T} + \alpha F) \subseteq \sigma(\widetilde{T} + \alpha F) + B(0, \Phi_{a,b,t,S}(\alpha)) $$
and $\sigma(\widetilde{T} + \alpha F) + B(0, \Phi_{a,b,t,S}(\alpha))$ is disconnected in $\mathbb{C}$.\\
Here, $\widetilde{T} = m T$, with $m >0$ such that $\| \widetilde{T} \| = a$, and $\Phi$ is a function depending on $\alpha$.
\end{conjecture}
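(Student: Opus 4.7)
My plan is to exploit the rank-one perturbation structure via the Sherman-Morrison identity for resolvents, combined with Propositions~\ref{Prop:2.2} and~\ref{Prop:2.5} and the quantitative pseudospectral estimates of Trefethen-Embree [11, Theorem~6.1].

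I would first normalise by fixing $a$ (say $a=1$) and reserving $b$ as a small parameter to be chosen. Applying Proposition~\ref{Prop:2.2} to the quasinilpotent operator $\widetilde{T}^{\ast}$ and the rank-one $F$, the hypothesis that $\widetilde{T}^{\ast}+\alpha F$ is non-quasinilpotent for at least two distinct $\alpha$ rules out case~(i), so non-quasinilpotence persists for every $\alpha\in\mathbb{C}\setminus\{0\}$ outside at most one exceptional point. I would take $S$ to be a two-element subset of this cofinite set, selected inside a fixed compact annulus $\{r_{1}\le|\alpha|\le r_{2}\}\subset\mathbb{C}\setminus\{0\}$ to retain control over $\|\alpha F\|$; this secures condition~(i). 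Note that if $\widetilde{T}+\alpha F$ were quasinilpotent for all $\alpha$, Proposition~\ref{Prop:2.1}(ii) would already produce a nontrivial invariant subspace; hence we may also assume Proposition~\ref{Prop:2.2} applies nontrivially to $\widetilde{T}$, forcing $\sigma_{p}(\widetilde{T}+\alpha F)$ to be either countably infinite or finite with more than one point for cofinite $\alpha$, and hence $\sigma(\widetilde{T}+\alpha F)$ to be disconnected.

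For the spectral side, write $F=u\otimes v$ with $\|u\|\|v\|=b$. The Sherman-Morrison identity for rank-one perturbations expresses $R_{\widetilde{T}+\alpha F}(z)$ in terms of $R_{\widetilde{T}}(z)$, $u$, $v$, and $\alpha$, and since $\widetilde{T}$ is quasinilpotent, $R_{\widetilde{T}}(z)$ exists for every $z\neq 0$ with the Neumann-type bound $\|R_{\widetilde{T}}(z)\|\le|z|^{-1}\sum_{n\ge 0}\|\widetilde{T}^{n}\|\,|z|^{-n}$. This yields an explicit bound on $\|R_{\widetilde{T}+\alpha F}(z)\|$ as a function of $|z|$, $|\alpha|$, $b$, and the decay rate $\|\widetilde{T}^{n}\|^{1/n}\to 0$. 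Using the remark from the excerpt that a finite-but-not-singleton or countably infinite subset of $\mathbb{C}$ is automatically disconnected, I would extract via Proposition~\ref{Prop:2.5} a quantitative spectral gap $d(\alpha)>0$ between two components of $\sigma(\widetilde{T}+\alpha F)$, and then define
\[
\Phi_{a,b,t,S}(\alpha)\;:=\;\sup\bigl\{\operatorname{dist}\bigl(z,\sigma(\widetilde{T}+\alpha F)\bigr):z\in\sigma_{t}(\widetilde{T}+\alpha F)\bigr\},
\]
which makes (ii) tautological. The real content then becomes showing that $t>0$ can be chosen, depending only on $a$ and $b$, so that $\Phi<d(\alpha)/2$ for every $\alpha\in S$, thereby keeping the thickened spectrum disconnected.

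The \textbf{main obstacle} is precisely this uniform pseudospectral-containment step. Because non-normal operators can have pseudospectra extending far beyond any small neighbourhood of the spectrum, a naive bound of the form $\sigma_{t}\subseteq\sigma+B(0,t\kappa)$ with a universal $\kappa$ cannot hold without additional structural input. I would try to combine the resolvent-norm estimates of [11, Theorem~6.1] with the information supplied by Proposition~\ref{Prop:2.4} --- strong irreducibility of $\widetilde{T}$ modulo a small compact perturbation, together with $\dim\ker S=2$ and $\sigma_{p}(S^{\ast})=\emptyset$ --- to control the resolvent on a neighbourhood of each component of $\sigma(\widetilde{T}+\alpha F)$, and then transfer the bounds back to $\widetilde{T}+\alpha F$ itself via another application of Proposition~\ref{Prop:2.5}. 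This is the delicate quantitative step where the heuristic argument alluded to in the excerpt would have to be turned into a rigorous proof, and I expect it to be the true crux of the conjecture.
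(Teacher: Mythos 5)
The statement you were asked to prove is the paper's central \emph{conjecture}: the paper itself offers no proof of it, only a heuristic plausibility argument (Section 5) based on pushing a nonzero eigenvalue of $T+\alpha F$ far from the origin for large $|\alpha|$ and a Jensen-type zero-counting bound (Proposition~\ref{Prop:5.1}). So there is no proof in the paper to match your proposal against, and your proposal does not close the gap either, as you concede in your final paragraph. The missing step is exactly the one the paper isolates in Remark~\ref{Rm:2.2}: one must produce $a,b,t>0$ and a function $\Phi$ depending only on $a$, $b$, $t$, $S$ and $\alpha$ --- \emph{not} on $F$ --- such that, uniformly over all rank-one quasinilpotent $F$ with $\|F\|=b$ satisfying the hypothesis, the $t$-pseudospectrum of $\widetilde T+\alpha F$ sits inside a $\Phi(\alpha)$-thickening of the spectrum that is still disconnected. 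Your definition of $\Phi_{a,b,t,S}(\alpha)$ as $\sup\{\operatorname{dist}(z,\sigma(\widetilde T+\alpha F)):z\in\sigma_{t}(\widetilde T+\alpha F)\}$ makes the inclusion in (ii) tautological, but it is computed from the spectra of the particular perturbed operator, i.e.\ it depends on $F$ and not merely on $\|F\|=b$; Remark~\ref{Rm:2.2} explicitly rules this out (``If $\Phi$ were allowed to depend on $F$, it would be much easier to show the statement, but \dots we cannot do so''). Even setting that aside, nothing in your outline bounds this quantity below half the spectral gap $d(\alpha)$: the Neumann-series estimate $\|R_{\widetilde T}(z)\|\le\sum_{n}\|\widetilde T^{n}\|\,|z|^{-n-1}$ blows up near $0$ at a rate governed by the individual operator, the Sherman--Morrison formula transfers resolvent control to $\widetilde T+\alpha F$ only away from its (unknown, $F$-dependent) nonzero eigenvalues, and [11, Theorem~6.1] gives pseudospectral inclusions for special structured classes, not for arbitrary quasinilpotent operators. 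Proposition~\ref{Prop:2.5} is likewise of no quantitative help here: its $\delta$ is expressed through the resolvent norm along a separating curve, which is precisely the quantity you are trying to bound uniformly in $F$.

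A smaller point: your observation that if $\widetilde T+\alpha F$ were quasinilpotent for all $\alpha$ then Proposition~\ref{Prop:2.1} would yield an invariant subspace is true but irrelevant to proving the conjecture, which quantifies over exactly those $F$ for which $\widetilde T^{*}+\alpha F$ fails to be quasinilpotent for at least two $\alpha$'s and demands (i) and (ii) for every such $F$ with a single choice of $a,b,t$. In summary, your proposal correctly reproduces the easy parts (non-quasinilpotence for cofinitely many $\alpha$ via Proposition~\ref{Prop:2.2}, disconnectedness of the countable or finite non-singleton spectrum, existence of some set $S$) and correctly identifies the crux, but that crux is the entire content of the conjecture, and it remains unproved both in your proposal and in the paper itself.
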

Actually, it will be clear from the proof of our main result that even some slight modifications of this conjecture would imply the existence of a n.i.s. for every quasinilpotent operator.
\begin{remark}\label{Rm:2.2}
\normalfont We note that the first part of the conjecture is clearly true: the function $\Phi$, which is not required to be continuous or to satisfy any other condition, always exists. Moreover, it is easy to construct examples of quasinilpotent operators $A$ having some rank $1$ perturbations $A+ \alpha F$ for distinct values of $\alpha$ which are not quasinilpotent (use, e.g., Lemma 2.1 in [30]). So the actual question is whether $a,b,t>0$ exist so that for all rank $1$ quasinilpotents $F$ with $\| F \| =b$ for which $\widetilde{T} + \alpha F$ is not quasinilpotent for at least two nonzero distinct values of $\alpha$, $\sigma(\widetilde{T} + \alpha F) + B(0, \Phi_{a,b,t,S}(\alpha))$ is disconnected for all $\alpha$ in some set $S$ (not containing $0$) which may depend on $F$.\\
We also notice that we may instead restrict the Conjecture and ask for a set $S$ with only two elements satisfying the hypothesis, and this would be simpler, yet such statement might not hold. There is no assurance that it is always possible to take such a set $S$, while the heuristic argument we propose later (which justifies the Conjecture and let us expect it to be true) suggests that sets $S$ with $\geq 2$ elements and with the properties above probably exist.\\
To conclude, we point out that we cannot just take $\Phi$ "small enough" and prove the statement, because we do not have general knowledge of the spectrum of $\widetilde{T} + \alpha F$, nor of the $t$-pseudospectrum of this operator. Furthermore, these spectra vary with $F$, while $\Phi$ does not depend on $F$ but on its norm, on $t$, on the norm of $ \widetilde{T}$, on $S$ and on $\alpha$. Apart from the latter, the other values are fixed, and their existence is the focus of our conjecture. If $\Phi$ were allowed to depend on $F$, it would be much easier to show the statement, but as will be clear in the proof of our main result, we cannot do so. For these reasons, the problem we pose is actually more difficult that it may appear at first sight.
\end{remark}
\section{Some reductions}
In this section we will prove a useful reduction of the ISP for separable Hilbert spaces. We will first recall some related reductions obtained by other authors. It can be shown (see later) that if $\sigma_\Phi(T) \neq \sigma(T)$, then $T$ has a nontrivial hyperinvariant subspace (i.e. a nontrivial subspace which is invariant under every operator $S$ which commutes with $T$; clearly, such a space is also invariant under $T$). Moreover, it has been proved that every operator $T$ for which $\sigma_\Phi(T) = \sigma(T)$ is quasitriangular, i.e. there exists an increasing sequence $\lbrace P_n \rbrace$ of finite-rank projections converging strongly to $\Id$ such that
$$ \| P_n T P_n - T P_n \| \rightarrow 0 $$
The ISP is then equivalent to the ISP for biquasitriangular operators, i.e. operators $T$ which are quasitriangular and whose adjoint $T^*$ is quasitriangular. Actually, something more can be shown: we can consider just the operators with connected spectrum, since a disconnected spectrum is known to imply the existence of nontrivial (hyper)invariant subspaces (see [27, Corollary 2.11]). Moreover, we can assume that $0 \in \sigma(T)$ by translation by some scalar $\lambda$. Thus, we can define a class $C(H)$ consisting of the biquasitriangular operators with connected spectrum and connected Fredholm spectrum such that $0$ belongs to $\sigma(T)$. By Theorem 1.1 in [24], $C(H)= \mathcal{N}^{-}$, the norm closure of the space of nilpotent operators. Actually, the problem can be reduced a bit more, as we will see later. This result shows that understanding the norm closure of nilpotents leads to interesting consequences in the theory of invariant subspaces. There are various papers in which the norm closure of $\mathcal{N}$ has been studied: among these, we refer to [1, 9, 12, 24]. We refer to [5] for some results on invariant subspaces for quasitriangular operators.\\
We will prove our restriction of the ISP using the following extension of Lomonosov Theorem, obtained in [21]:
\begin{theorem}[22, Theorem A]\label{Thm:3.1}
Let $T \in B(H)$ be nonscalar. If there exists a nonzero compact $K$ such that $\rk(TK-KT) \leq 1$, then $T$ has a nontrivial hyperinvariant subspace.
\end{theorem}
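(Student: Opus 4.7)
The plan is to adapt Lomonosov's fixed-point proof of his classical hyperinvariant subspace theorem (which covers the case $TK=KT$) to the weaker hypothesis that $[T,K]$ has rank at most one; the rank-one defect will be absorbed at the end by a finite-dimensional analysis on the eigenspace produced by the Lomonosov--Schauder construction. As preliminary reductions, if $T$ has any eigenvalue $\lambda$ then $\ker(T-\lambda\Id)$ is a nontrivial hyperinvariant subspace (nontrivial precisely because $T$ is nonscalar), and if $T^*$ has any eigenvalue $\mu$ then $\overline{\Ima(T-\bar\mu\Id)}=\ker(T^*-\mu\Id)^\perp$ is one. So I may assume $\sigma_p(T)=\sigma_p(T^*)=\emptyset$ and argue by contradiction: suppose $T$ has no nontrivial hyperinvariant subspace. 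Writing $\mathcal{A}:=\{T\}'$ for the commutant, this means that every orbit $\mathcal{A}z$ ($z\neq 0$) is dense in $H$.

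Next I would run the Lomonosov--Schauder construction essentially verbatim, since that step uses only the compactness of $K$, not its commutation with $T$. After normalizing $\|K\|=1$, pick $x_0$ with $\|Kx_0\|>1$ and let $B$ be the closed unit ball around $x_0$, so that $0\notin\overline{K(B)}$. Density of the $\mathcal{A}$-orbits yields, for each $y\in\overline{K(B)}$, an $A_y\in\mathcal{A}$ with $\|A_yy-x_0\|<1$; compactness of $\overline{K(B)}$ extracts a finite subcover with subordinate partition of unity $\{\beta_i\}_{i=1}^n$ and operators $\{A_i\}_{i=1}^n\subseteq\mathcal{A}$. The map
\[
\phi(x):=\sum_{i=1}^n\beta_i(Kx)\,A_i(Kx)
\]
is a continuous self-map of $B$ with precompact image, so Schauder's fixed-point theorem produces $x^*\in B$ (hence $x^*\neq 0$) with $\phi(x^*)=x^*$. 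Setting $A:=\sum_i\beta_i(Kx^*)A_i\in\mathcal{A}$ gives $AKx^*=x^*$, so the finite-dimensional subspace $V:=\ker(AK-\Id)$ is nonzero.

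The rank-one hypothesis now enters through the identity
\[
[T,AK]=A[T,K]+[T,A]K=AF,
\]
where $F=TK-KT=u\otimes v$ has rank at most one. Hence $[T,AK]$ has rank at most one with range contained in $\mathbb{C}\cdot Au$, and so $T(V)\subseteq V+\mathbb{C}\cdot Au$; concretely, for $x\in V$ one computes $(AK-\Id)(Tx)=-\langle x,v\rangle Au$. If $Au=0$ or $Au\in V$, then $V$ is itself a nonzero finite-dimensional $T$-invariant subspace, and any eigenvector of $T|_V$ contradicts the no-eigenvalue reduction. In the remaining case $Au\notin V$, I would shrink $V$ to a still nonzero $T$-invariant subspace on which the rank-one obstruction disappears: from the formula above, $V\cap\{v\}^\perp$ maps into $V$ under $T$, and iteratively intersecting with the hyperplanes $\{(T^*)^k v\}^\perp$ stabilizes to a subspace
\[
V_\infty\ :=\ V\,\cap\,\bigcap_{k\geq 0}\{(T^*)^k v\}^\perp
\]
that is easily checked to satisfy $T(V_\infty)\subseteq V_\infty$; any eigenvector of $T|_{V_\infty}$ then finishes the proof. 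The main obstacle --- where the argument becomes genuinely subtle --- is to guarantee $V_\infty\neq\{0\}$, since the $T^*$-orbit of $v$ could in principle be rich enough to force $V_\infty$ to collapse to zero. I would attempt to overcome this either by enlarging $V$ to the generalized eigenspace $\bigcup_m\ker((AK-\Id)^m)$ (using the compactness of $AK$ to keep it finite-dimensional), or by rerunning the Schauder construction with perturbed choices of $x_0$ and $B$ to obtain a whole family of such eigenspaces, at least one of which must survive intersection with the $(T^*)^k v$-hyperplanes thanks to the assumption $\sigma_p(T^*)=\emptyset$.
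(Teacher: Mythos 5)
The paper itself does not prove this statement: Theorem \ref{Thm:3.1} is imported verbatim from Kim--Pearcy--Shields (reference [21]/[22]), so there is no internal proof to compare yours against, and your attempt must be judged on its own merits. Your framework is the right one --- the reductions to $\sigma_p(T)=\sigma_p(T^*)=\emptyset$, the transitivity of the commutant $\mathcal{A}=\{T\}'$, and the Schauder construction indeed use only the compactness of $K$ and go through unchanged, and the identity $(AK-\Id)(Tx)=-\langle x,v\rangle Au$ for $x\in V=\ker(AK-\Id)$ is correct. (A small slip: that identity yields $T(V)\subseteq (AK-\Id)^{-1}(\mathbb{C}\,Au)$, which equals $V$ plus the span of a \emph{preimage} of $Au$ under $AK-\Id$ when such a preimage exists, not $V+\mathbb{C}\,Au$; this does not affect the rest, since you only ever use the displayed identity.)

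The genuine gap is exactly the one you flag yourself, and it is not a technicality but the entire content of the theorem beyond Lomonosov. The space $V_\infty=V\cap\bigcap_{k\geq 0}\{(T^*)^kv\}^\perp$ is obtained by cutting the $n$-dimensional space $V$ with the hyperplanes orthogonal to $v, T^*v,\dots$; the chain $V_j=V\cap\bigcap_{k<j}((T^*)^kv)^\perp$ stabilizes after at most $n$ steps, but it may perfectly well stabilize at $\{0\}$ (generically it does, since $n$ hyperplanes in general position already annihilate an $n$-dimensional space), and nothing in your construction prevents this. Neither proposed repair closes the hole: replacing $V$ by the generalized eigenspace $\bigcup_m\ker((AK-\Id)^m)$ enlarges the space but the number of hyperplanes needed to reach stabilization grows with it, so the same collapse can occur; and ``rerunning Schauder with perturbed $x_0$'' comes with no mechanism forcing one of the resulting eigenspaces to survive --- the operators $A$ delivered by the fixed-point argument are not under your control, so you cannot steer $Au$, $v$, or the position of $V$ relative to the $T^*$-orbit of $v$, and the hypothesis $\sigma_p(T^*)=\emptyset$ by itself says nothing about how that orbit meets a given finite-dimensional subspace. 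As written, your argument proves the theorem only in the favourable cases ($Au=0$, $Au\notin\Ima(AK-\Id)$, or $V\perp$ the $T^*$-orbit of $v$); the published proof must, and does, dispose of the remaining case by a different device, so you should consult [21] rather than regard this as a finishing detail.
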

In [22], Kim, Pearcy and Shields defined the class $\Delta(H)$ of operators for which there is some nonzero compact operator $K$ such that $\rk(TK-KT)=1$ and they proved some results. In particular, we recall the following ones:
\begin{proposition}[22, Proposition 1]\label{Prop:3.1}
$T \in \Delta(H)$ if and only if $(\alpha T + \beta \Id) \in \Delta(H)$ for all $\alpha \in \mathbb{C} \setminus \lbrace 0 \rbrace$ and for all $\beta \in \mathbb{C}$.
\end{proposition}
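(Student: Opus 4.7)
The plan is to reduce both directions to a single elementary commutator identity, using the \emph{same} compact operator $K$ for $T$ and for $\alpha T + \beta \Id$.

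First I would prove the ``only if'' direction. Assume $T \in \Delta(H)$, so there is a nonzero compact operator $K$ with $\rk(TK - KT) = 1$. For arbitrary $\alpha \in \mathbb{C} \setminus \{0\}$ and $\beta \in \mathbb{C}$, I would compute the commutator directly:
$$ (\alpha T + \beta \Id) K - K(\alpha T + \beta \Id) = \alpha(TK - KT) + \beta(K - K) = \alpha (TK - KT). $$
Since $\alpha \neq 0$, scaling by $\alpha$ does not change the rank, so $\rk\big((\alpha T + \beta \Id) K - K(\alpha T + \beta \Id)\big) = \rk(TK - KT) = 1$. As $K$ is still a nonzero compact operator, this exhibits $\alpha T + \beta \Id \in \Delta(H)$.

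For the ``if'' direction, the statement ``$(\alpha T + \beta \Id) \in \Delta(H)$ for all $\alpha \in \mathbb{C} \setminus \{0\}$ and all $\beta \in \mathbb{C}$'' is, in particular, true for $\alpha = 1$, $\beta = 0$, giving $T \in \Delta(H)$ immediately. (Alternatively, one can run the same computation above in reverse: if $K$ is a witness for $\alpha T + \beta \Id$, then $\alpha^{-1}$ times the commutator exhibits $K$ as a witness for $T$.)

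There is no real obstacle here; the only point worth checking carefully is that the identity kills the $\beta$-contribution so that the commutator depends only on $\alpha$ and on $TK - KT$, and that scaling by a nonzero scalar preserves both rank and the property of being a nonzero compact operator. Both facts are routine, so the argument is essentially a one-line computation packaged symmetrically for both implications.
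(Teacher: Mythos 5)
Your argument is correct: the paper itself gives no proof of this proposition (it is quoted from [22]), and your commutator identity $(\alpha T+\beta \Id)K-K(\alpha T+\beta \Id)=\alpha(TK-KT)$ with the same witness $K$ is exactly the standard computation behind the original result. Nothing is missing; the rank and compactness are preserved under nonzero scaling, and the converse is immediate by specializing $\alpha=1$, $\beta=0$.
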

\begin{proposition}[22, Theorem 3]\label{Prop:3.2}
If $\sigma(T)$ is disconnected, then $T \in \Delta(H)$.
\end{proposition}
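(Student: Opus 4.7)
The plan is to use a Riesz spectral decomposition of $T$ to split $H$ into two $T$-invariant pieces with disjoint spectra and then to build $K$ by solving a Sylvester equation across the split, so that the commutator $[T,K]$ is forced to equal a copy of a preselected rank-one operator. First, since $\sigma(T)$ is disconnected I would write $\sigma(T) = \sigma_1 \sqcup \sigma_2$ with $\sigma_1,\sigma_2$ nonempty, disjoint, and closed, and pick a rectifiable Jordan contour $\Gamma$ enclosing $\sigma_1$ while lying outside $\sigma_2$. The Riesz projection $P = \frac{1}{2\pi i}\int_\Gamma (zI-T)^{-1}\,dz$ is a bounded idempotent commuting with $T$ and yields the topological direct sum $H = H_1 \oplus H_2$ with $H_1 = PH$ and $H_2 = (I-P)H$, both $T$-invariant. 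The induced block-diagonal decomposition $T = T_1 \oplus T_2$ satisfies $\sigma(T_i) = \sigma_i$, so in particular $\sigma(T_1) \cap \sigma(T_2) = \emptyset$.

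\medskip
Next, I would fix any nonzero rank-one operator $Y \colon H_2 \to H_1$. Because $\sigma(T_1) \cap \sigma(T_2) = \emptyset$, the Sylvester equation $T_1 B - B T_2 = Y$ admits the unique solution
$$B = \frac{1}{2\pi i}\int_\Gamma (T_1-z)^{-1} Y (T_2-z)^{-1}\,dz,$$
as one verifies by a short residue calculation (expand $T_1(T_1-z)^{-1} = I + z(T_1-z)^{-1}$ and $(T_2-z)^{-1} T_2 = I + z(T_2-z)^{-1}$ inside the integral and apply the Riesz functional calculus to each factor). Since $Y$ has rank one, for each $z \in \Gamma$ the integrand has rank at most one and hence is compact, and it depends norm-continuously on $z$ on the compact contour $\Gamma$. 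Consequently $B$ lies in the norm closure of the ideal of compact operators, so $B$ is compact; moreover $B \neq 0$ since $T_1 B - B T_2 = Y \neq 0$.

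\medskip
Finally, let $\iota \colon H_1 \hookrightarrow H$ denote the canonical inclusion and $\pi \colon H \to H_2$ the projection along $H_1$, and define $K := \iota B \pi \in B(H)$. Then $K$ is compact and nonzero. From $T\iota = \iota T_1$ and $\pi T = T_2 \pi$ (both consequences of the $T$-invariance of $H_1$ and $H_2$), a direct calculation gives
$$TK - KT = \iota(T_1 B - B T_2)\pi = \iota Y \pi,$$
which has rank exactly one since $\iota$ is injective and $\pi$ is surjective onto $H_2$; thus $T \in \Delta(H)$. The only nontrivial point in the whole argument is verifying the compactness of the Sylvester-equation solution $B$, which I regard as the main technical step; it reduces, as indicated above, to norm-continuity of the integrand on the compact contour together with the fact that the compact operators form a norm-closed ideal. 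Everything else — the spectral decomposition, the residue derivation of the integral formula for $B$, and the commutator computation — is entirely formal.
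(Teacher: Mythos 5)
Your argument is correct and complete. Note that the paper itself gives no proof of this statement: it is imported verbatim as Theorem~3 of Kim--Pearcy--Shields [22], so there is nothing internal to compare against. Your route --- Riesz decomposition $T = T_1 \oplus T_2$ with $\sigma(T_1) \cap \sigma(T_2) = \emptyset$, followed by Rosenblum's integral formula to solve the Sylvester equation $T_1 B - B T_2 = Y$ for a prescribed rank-one $Y$, and then $K = \iota B \pi$ --- is in fact the standard proof of that theorem, and all the steps you flag check out: the contour $\Gamma$ used for the Riesz projection lies in $\rho(T_1) \cap \rho(T_2)$, winds once around $\sigma(T_1)$ and not at all around $\sigma(T_2)$, so the residue computation does return $Y$; the Riemann sums of the integral are finite-rank (each integrand has rank at most one), so $B$ is compact; $B \neq 0$ because $Y \neq 0$; and $\iota Y \pi$ has rank exactly one since $\iota$ is injective and $\pi$ surjective. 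One could even shortcut the compactness discussion by observing that $X \mapsto T_1 X - X T_2$ is a bijection of $B(H_2,H_1)$ whose inverse, by the integral formula, maps finite-rank operators to compact ones; but your direct verification is fine as written.
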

\begin{proposition}[22, Proposition 2]\label{Prop:3.3}
If $\sigma_p(T) \cup \sigma_p(T^*) \neq \emptyset$, then $T \in \Delta(H)$.
\end{proposition}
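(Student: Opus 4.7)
The plan is to produce, under either half of the hypothesis, an explicit rank one (hence automatically compact) operator $K$ such that $TK-KT$ has rank exactly one; this immediately places $T$ in $\Delta(H)$ by definition. Throughout the argument I will tacitly assume that $T$ is not a scalar multiple of the identity, which is standard in this circle of ideas (scalars trivially give $TK-KT=0$ for every $K$, and are in any case excluded from the application of Theorem \ref{Thm:3.1}).

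Suppose first that $\lambda\in\sigma_p(T)$ with eigenvector $x\neq 0$, $Tx=\lambda x$. For a vector $y\in H$ to be chosen momentarily, I would consider the rank one operator
$$
Kz:=\langle z,y\rangle\,x,\qquad z\in H.
$$
Using $Tx=\lambda x$, the adjoint identity $\langle Tz,y\rangle=\langle z,T^*y\rangle$, and the sesquilinearity of $\langle\cdot,\cdot\rangle$, a direct computation gives
$$
(TK-KT)z \;=\; \langle z,\bar{\lambda}y-T^*y\rangle\,x,
$$
so $TK-KT$ has rank one precisely when $T^*y\neq\bar{\lambda}y$. If no such $y$ existed we would have $T^*=\bar{\lambda}\Id$, i.e.\ $T=\lambda\Id$, contradicting nonscalarity. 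Hence any $y\notin\ker(T^*-\bar{\lambda}\Id)$ yields a compact $K$ with $\rk(TK-KT)=1$.

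If instead $\mu\in\sigma_p(T^*)$ with $T^*y=\mu y$, $y\neq 0$, I would run the dual construction: keep $Kz:=\langle z,y\rangle\,x$ but now treat $x$ as the free parameter. The same computation rearranges to
$$
(TK-KT)z \;=\; \langle z,y\rangle\,(Tx-\bar{\mu}x),
$$
which has rank one as soon as $x\notin\ker(T-\bar{\mu}\Id)$, and by nonscalarity such $x$ exists. In either case $K$ is finite rank (so compact) and $\rk(TK-KT)=1$, proving $T\in\Delta(H)$.

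There is no substantive obstacle: the whole argument is the observation that the commutator of $T$ with a rank one operator of the form $z\mapsto\langle z,y\rangle\,x$ is itself of rank at most one, and the only real task is ruling out the degenerate case $TK-KT=0$. The nonscalarity hypothesis handles that degeneracy as soon as we have an eigenvector of $T$ or of $T^*$ available.
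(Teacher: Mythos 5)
Your argument is correct: the commutator of $T$ with $Kz=\langle z,y\rangle x$ is again of rank at most one, and the eigenvector hypothesis plus nonscalarity lets you choose the free vector so that the rank is exactly one. The paper itself offers no proof of this proposition --- it is quoted verbatim from [22, Proposition 2] --- and your computation is the standard argument behind that citation; your explicit flagging of the nonscalarity assumption is also apt, since a scalar operator has nonempty point spectrum yet satisfies $TK-KT=0$ for every $K$, so the statement genuinely requires that (implicit) hypothesis, which is supplied where the proposition is used in Theorem~\ref{Thm:3.2} via Theorem~\ref{Thm:3.1}.
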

We can now prove:
\begin{theorem}\label{Thm:3.2}
Assuming Conjecture \ref{Conj:2.1}, the ISP for separable, complex, infinite dimensional Hilbert spaces is equivalent to the ISP for operators $T \in B(H)$ such that:
$$ \sigma(T)= \sigma_c(T)=\sigma_a(T)=\sigma_{\Phi}(T)=\sigma_w(T) $$
and for which the spectrum is connected, contains $0$ and is not a singleton.
\end{theorem}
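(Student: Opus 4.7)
The forward direction is trivial, since the restricted class sits inside $B(H)$; so the real content of the equivalence lies in the converse: assuming every operator in the stated class has a nontrivial invariant subspace, one must produce such a subspace for an arbitrary $T \in B(H)$. My plan is to peel off, one by one, the cases in which an invariant subspace is already guaranteed by the results collected in this section, and to show that whatever remains either belongs to, or (after a translation) can be placed inside, the restricted class.

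First I would dispose of the degenerate cases. If $T$ is scalar, every closed subspace is invariant. If $\sigma(T)$ is disconnected, Proposition \ref{Prop:3.2} places $T$ in $\Delta(H)$ and Theorem \ref{Thm:3.1} then supplies a nontrivial invariant subspace. If $\sigma(T) = \{\lambda\}$ is a singleton, then $T - \lambda\Id$ is quasinilpotent, and the main theorem of Section~4 (whose proof is where Conjecture \ref{Conj:2.1} is actually used) gives a nontrivial invariant subspace for $T - \lambda\Id$, which is automatically invariant for $T$. This is the only moment in the entire reduction where Conjecture \ref{Conj:2.1} enters.

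Hence I may assume that $T$ is nonscalar and that $\sigma(T)$ is connected and not a singleton. Translating by some $\lambda_{0}\in\sigma(T)$, which does not affect the lattice of invariant subspaces, I can arrange $0\in\sigma(T)$. If $\sigma_p(T)\cup\sigma_p(T^*) \neq \emptyset$, Proposition \ref{Prop:3.3} together with Theorem \ref{Thm:3.1} yields the desired subspace. Otherwise $\sigma_p(T) = \sigma_p(T^*) = \emptyset$. Since $\lambda\in\sigma_r(T)$ would force $\bar{\lambda}\in\sigma_p(T^*)$, I also get $\sigma_r(T) = \emptyset$, so $\sigma(T) = \sigma_c(T)$ and, by (\ref{Eq:2.2}), $\sigma(T) = \sigma_a(T)$. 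Any point in $\sigma(T)\setminus\sigma_\Phi(T)$ would make $T - \lambda\Id$ Fredholm but not invertible, forcing either $\ker(T-\lambda\Id)$ or $\ker(T^*-\bar{\lambda}\Id)$ to be nonzero and contradicting the absence of point spectrum; the same argument, combined with $\sigma_\Phi(T)\subseteq\sigma_w(T)$, also yields $\sigma(T) = \sigma_w(T)$. The translated $T$ therefore belongs to the target class, and invoking the hypothesis produces the invariant subspace we were after.

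The real difficulty is not in this reduction — every step other than the singleton case is essentially bookkeeping with the results recalled in Sections~2 and~3 — but in the singleton (equivalently, quasinilpotent) case, which is precisely what Conjecture \ref{Conj:2.1} and the main theorem of Section~4 are designed to handle. Within the present argument, the only items that need care are checking that $T$ is nonscalar before invoking Theorem \ref{Thm:3.1}, and verifying that the cascade $\sigma_p(T) = \sigma_p(T^*) = \emptyset \Rightarrow \sigma_r(T) = \emptyset \Rightarrow \sigma(T) = \sigma_c(T) = \sigma_a(T) = \sigma_\Phi(T) = \sigma_w(T)$ really does go through without a hidden gap.
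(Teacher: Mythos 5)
Your proposal is correct and follows essentially the same reduction as the paper: peel off disconnected spectrum and nonempty point spectra via Propositions \ref{Prop:3.2}--\ref{Prop:3.3} and Theorem \ref{Thm:3.1}, deduce $\sigma_r(T)=\emptyset$ and the chain of spectral equalities, translate to put $0$ in the spectrum, and invoke Theorem \ref{Thm:4.1} (hence Conjecture \ref{Conj:2.1}) only to exclude singletons. The one minor variation is the step $\sigma_\Phi(T)=\sigma(T)$: the paper handles the case $\sigma_\Phi(T)\neq\sigma(T)$ by exhibiting $\Ima(T-\lambda\Id)$ as a closed nontrivial invariant subspace, whereas you observe that once $\sigma_p(T)=\sigma_p(T^*)=\emptyset$ a Fredholm, non-invertible $T-\lambda\Id$ is impossible, so the case is vacuous --- both arguments are valid.
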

\begin{proof}
Without loss of generality, we might consider $\sigma(T)$ to be connected and $\sigma_p(T) \cup \sigma_p(T^*) = \emptyset$. Otherwise, by Proposition \ref{Prop:3.2} and Proposition \ref{Prop:3.3}, it follows that $T \in \Delta(H)$ and hence $T$ has a nontrivial (hyper)invariant subspace as an application of Theorem \ref{Thm:3.1}.\\
Moreover, we can consider $\sigma_{\Phi}(T) = \sigma(T)$. Otherwise, let $\lambda$ be a complex number such that $T - \lambda \Id$ is not invertible but it is Fredholm. Since $\sigma_p(T) \cup \sigma_p(T^*) = \emptyset$, the operator $T - \lambda \Id$ is injective but not surjective, due to its non-invertibility. On the other hand, since $T- \lambda \Id$ is Fredholm and $T$ is nonscalar, the range of $T - \lambda \Id$ is a (closed) nontrivial invariant subspace for $T - \lambda \Id$, which implies the existence of a nontrivial invariant subspace for $T$.\\
Now, since $\sigma_{\Phi}(T) = \sigma(T)$, the set relationship $\sigma_{\Phi}(T) \subseteq \sigma_w(T) \subseteq \sigma(T)$ is actually the equality $\sigma_{\Phi}(T) = \sigma_w(T) = \sigma(T)$. Moreover, we can assume $\sigma_r(T) = \emptyset$, since otherwise the closure of the range of $T$ is a nontrivial invariant subspace. Therefore, we obtain:
$$ \sigma(T)=\sigma_p(T) \cup \sigma_c(T) \cup \sigma_r(T)= \sigma_c(T) $$
Now, by equation \eqref{Eq:2.2}, the continuous spectrum is equal to the approximate point spectrum, so that the chain of equalities in the Theorem can indeed be assumed. By Theorem \ref{Thm:4.1}, operators whose spectrum is a singleton can be excluded. To conclude, note again that $0$ can be assumed to be in the spectrum by translation by some scalar $\lambda$, and Theorem \ref{Thm:4.1} assures (under Conjecture \ref{Conj:2.1}) that we can exclude singletons.
\end{proof}
We also note that:
\begin{proposition}\label{Prop:3.4}
Every operator in the norm closure of $\mathcal{N}$ can be written as the norm limit of operators having nontrivial invariant subspaces.
\end{proposition}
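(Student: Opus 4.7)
The plan is to reduce the proposition to the elementary observation that every nilpotent operator on $H$ already has a nontrivial invariant subspace, and then invoke the definition of the norm closure.

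First I would verify the key pointwise claim: every nilpotent $N \in B(H)$ admits a nontrivial closed invariant subspace. If $N = 0$, any closed proper subspace of $H$ works (since $\dim H = \infty$ there are many). If $N \neq 0$, choose the minimal integer $m \geq 1$ with $N^m = 0$. Then $\ker N$ is automatically closed and $N$-invariant. It is nonzero because $N^m = 0$ cannot hold for an injective operator on a nonzero space (iterating injectivity would force $N^m$ to be injective as well), and it is proper because $N \neq 0$ means there exists $x \in H$ with $Nx \neq 0$, hence $x \notin \ker N$. Alternatively, the closure of $\Ima(N)$ is contained in $\ker N^{m-1}$, which is a proper nonzero closed subspace invariant under $N$; either choice supplies the required nontrivial invariant subspace.

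The proposition then follows at once from the definition of norm closure. Given $T \in \overline{\mathcal{N}}$, pick a sequence of nilpotent operators $(N_k)_{k \geq 1}$ with $\|N_k - T\| \to 0$. By the previous step each $N_k$ has a nontrivial invariant subspace, so $T$ is exhibited as the norm limit of operators having nontrivial invariant subspaces, which is exactly the conclusion.

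There is no genuine obstacle in this argument: the substantive reduction is the identification of $C(H)$ with $\overline{\mathcal{N}}$ discussed earlier in the section, and Proposition \ref{Prop:3.4} merely emphasizes the conceptual point that any hypothetical failure of the ISP within $\overline{\mathcal{N}}$ would have to arise purely from passage to a norm limit, never from the approximating nilpotents themselves.
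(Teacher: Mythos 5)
Your proposal is correct and is exactly the argument the paper intends: Remark \ref{Rm:3.1} states that the result follows from the fact that nilpotent operators have nontrivial invariant subspaces, combined with the definition of the norm closure, which is precisely your reduction. Your filling in of why a nonzero nilpotent $N$ has $\ker N$ as a nonzero, proper, closed invariant subspace (and the trivial case $N=0$) is a correct elaboration of the same route.
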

\begin{remark}\label{Rm:3.1}
\normalfont This result can be easily shown just using the fact that nilpotent operators have nontrivial invariant subspaces. We have reported it here because this fact may be useful to contruct nontrivial invariant subspaces starting from the approximating sequence of nilpotents, as suggested in Question 1 at the end of [24].
\end{remark}
\section{Main result}
In this section we will prove the main result of the paper. To this end, we show the following relation between the spectrum, the point spectrum, and the Weyl spectrum of a bounded linear operator. This result can be found in [25, Chapter 8, equation (8.50)].
\begin{lemma}\label{Lm:4.1} Let $T \in B(H)$. Then:
$$ \sigma(T)= \sigma_p(T) \cup \sigma_w(T) $$
\end{lemma}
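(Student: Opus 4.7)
The plan is to prove the two inclusions separately, with the forward inclusion being essentially a triviality and the reverse inclusion following from a short contrapositive argument using the definition of $\Phi_0$.

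For the easy direction, $\sigma_p(T) \cup \sigma_w(T) \subseteq \sigma(T)$, I would just invoke the fact recalled in the preliminaries that $\sigma_p(T) \subseteq \sigma(T)$ and $\sigma_w(T) \subseteq \sigma(T)$ (the latter being part of the chain $\sigma_\Phi(T) \subseteq \sigma_w(T) \subseteq \sigma(T)$).

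For the reverse inclusion $\sigma(T) \subseteq \sigma_p(T) \cup \sigma_w(T)$, I would argue by contrapositive. Suppose $\lambda \in \sigma(T)$ with $\lambda \notin \sigma_p(T)$; I want to show $\lambda \in \sigma_w(T)$. Since $\lambda \notin \sigma_p(T)$, the operator $T - \lambda \Id$ is injective, so $\dim \ker(T - \lambda \Id) = 0$. Suppose, for contradiction, that $\lambda \notin \sigma_w(T)$, i.e., $T - \lambda \Id \in \Phi_0$. Then by the definition of $\Phi_0$ one has
$$ 0 = \dim \ker(T - \lambda \Id) = \dim \ker(T - \lambda \Id)^*, $$
so $(T-\lambda \Id)^*$ is injective, which means $T - \lambda \Id$ has dense range. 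Combined with the closed-range property of Fredholm operators, $T - \lambda \Id$ is surjective as well as injective, hence invertible by the open mapping theorem, contradicting $\lambda \in \sigma(T)$.

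There is really no obstacle here: the whole content is the elementary observation that a Fredholm operator of index zero which is injective must be invertible, applied to $T - \lambda \Id$. I would write this up in a few lines and cite the reference [25, Chapter 8, equation (8.50)] provided in the statement for the reader wanting further details.
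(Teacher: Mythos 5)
Your proof is correct and follows essentially the same route as the paper: both arguments reduce the nontrivial inclusion to the observation that an injective Fredholm operator of index zero must be invertible, using the closed range of Fredholm operators and the identity $\dim\ker(T-\lambda\Id)=\dim\ker(T-\lambda\Id)^*$. The only (cosmetic) difference is that the paper detours through the residual spectrum and the inclusion $\overline{\sigma_r(T)}\subseteq\sigma_p(T^*)$ to reach the same contradiction, whereas you pass directly from the injectivity of the adjoint to density of the range.
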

\begin{proof}
This is (8.50) in [25]. We will give here a short proof. First, note that:
$$\sigma_p(T) \cup \sigma_w(T) \subseteq \sigma(T)$$
Suppose that $\sigma_p(T) \, \cup \, \sigma_w(T) \subsetneq \sigma(T)$. Then, there is some $\lambda$ such that $T- \lambda \Id$ is Fredholm with index $0$ and is also injective. Since it is Fredholm, the closure of $\Ima(T- \lambda \Id)$ is equal to $\Ima(T- \lambda \Id)$ itself. By injectivity together with non-invertibility, $T- \lambda \Id$ is not surjective, so $\Ima(T- \lambda \Id) \neq H$ and hence the range is not dense in $H$. Thus, $T- \lambda \Id$ is injective but the range is not dense, which implies $\lambda \in \sigma_r(T)$ (the residual spectrum). It is well known that:
\begin{equation}\label{Eq:4.1}
\overline{\sigma_r(T)} \subseteq \sigma_p(T^*)
\end{equation}
where $\overline{A}:= \lbrace \overline{z} : z \in A \rbrace$ (the set of complex conjugates of the elements of $A$). Using this, we conclude that $\overline{\lambda} \in \sigma_p(T^*)$. But then $T^* - \overline{\lambda} \Id=(T- \lambda \Id)^*$ is not injective, so $\dim \ker ((T-\lambda \Id)^*) >0$. But this integer is equal to $\dim \ker (T- \lambda \Id)$ because this operator is Fredholm with index $0$. This would lead to a contradiction with the fact $\lambda \in \sigma_p(T)$. Therefore, the above equality holds.\\
A simple way to prove \eqref{Eq:4.1} is by using Hahn-Banach Theorem. More precisely, let $\lambda \in \sigma_r(T)$. By definition, the range of $T - \lambda \Id$ is not dense. By the Hahn-Banach Theorem, there is some nonzero $z \in X^*$ (the dual of $X$) that vanishes on $\Ima(T - \lambda \Id)$. $\forall x \in X$, we have:
$$ \langle z, (T- \lambda \Id)x \rangle = \langle (T^* - \overline{\lambda} \Id)z,x \rangle = 0 $$
Thus, $(T^* - \overline{\lambda} \Id)z = 0 \in X^*$ and hence $\overline{\lambda} \in \sigma_p(T^*)$. Note that this holds for any Banach space $X$, not only for Hilbert spaces. The above argument shows that:
$$ \overline{\sigma_r(T)} \subseteq \sigma_p(T^*) $$
which is \eqref{Eq:4.1}.
\end{proof}
\begin{remark}\label{Rm:4.1}
We recall that, for a Hilbert space $H$, the inclusion $\overline{\sigma_r(T)} \subseteq \sigma_p(T^*)$ follows from the orthogonal decomposition $H=\ker(T^* - \overline{\lambda} \Id) \, \oplus \,  \overline{\Ima(T - \lambda \Id)}$.
\end{remark}
At this point, assuming Conjecture \ref{Conj:2.1}, we provide a result on the existence of nontrivial invariant subspaces for the class of quasinilpotent operators.
\begin{theorem}\label{Thm:4.1}
Let $T \in B(H)$ be such that $\sigma(T)$ is a singleton. Then, assuming Conjecture \ref{Conj:2.1}, $T$ has a nontrivial invariant subspace (n.i.s.).
\end{theorem}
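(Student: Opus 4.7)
First, I would reduce to the quasinilpotent case: if $\sigma(T) = \{\lambda\}$ then $T - \lambda \Id$ is quasinilpotent and has the same invariant subspaces as $T$ (by the translation argument spelled out in the Introduction), so assume $\sigma(T) = \{0\}$ and argue by contradiction, supposing $T$ has no n.i.s.

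Proposition \ref{Prop:2.1} then rules out any rank-$1$ operator $F$ with $T + \alpha F$ quasinilpotent at $\alpha = 1$ and at some other $\alpha \neq 0, 1$, so for every rank-$1$ $F$ we land in case (ii) or (iii) of Proposition \ref{Prop:2.2}. Both cases force $\sigma_p(T + \alpha F) \setminus \{0\} \neq \emptyset$ for all but finitely many nonzero $\alpha$; in particular $T + \alpha F$ fails to be quasinilpotent for such $\alpha$. Since scaling preserves quasinilpotency, the same holds for $\widetilde{T} = mT$, where $m$ is chosen so that $\|\widetilde{T}\| = a$ for the constant $a$ given by Conjecture \ref{Conj:2.1}. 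Rank-$1$ quasinilpotents $F$ with $\|F\| = b$ are easily manufactured as $F = \langle \cdot, u \rangle v$ with $u \perp v$ and $\|u\|\|v\| = b$, so the hypothesis of Conjecture \ref{Conj:2.1} is met for any such $F$, yielding a set $S \subseteq \mathbb{C} \setminus \{0\}$ with $|S| \geq 2$ and a function $\Phi$ such that, for every $\alpha \in S$, $\sigma(\widetilde{T} + \alpha F) + B(0, \Phi_{a,b,t,S}(\alpha))$ is disconnected and contains $\sigma_t(\widetilde{T} + \alpha F)$.

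The next step is to transfer this disconnectedness back to $\widetilde{T}$ via upper semicontinuity. Fix $\alpha \in S$ and let $\mathcal{U} := \sigma(\widetilde{T} + \alpha F) + B(0, \Phi)$; since $\mathcal{U}$ is open, disconnected, and contains $\sigma(\widetilde{T} + \alpha F)$, I may choose a simple closed rectifiable curve $\Gamma$ in $\mathbb{C} \setminus \mathcal{U}$ that separates the components of $\sigma(\widetilde{T} + \alpha F)$. Being outside $\mathcal{U} \supseteq \sigma_t(\widetilde{T} + \alpha F)$, the curve $\Gamma$ satisfies $\|R_{\widetilde{T} + \alpha F}(\xi)\| \leq 1/t$ for every $\xi \in \Gamma$, so the explicit formula in Remark \ref{Rm:2.1} gives a perturbation threshold $\delta \geq t$ in Proposition \ref{Prop:2.5}. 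Hence $\sigma((\widetilde{T} + \alpha F) + E)$ remains disconnected for every bounded $E$ with $\|E\| < t$. Choosing $E = -\alpha F$ collapses this spectrum to $\sigma(\widetilde{T}) = \{0\}$, which is connected --- the desired contradiction, provided $\|E\| = |\alpha|b < t$.

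The crux, and the main obstacle, is precisely securing $|\alpha|b < t$ for some $\alpha \in S$; Conjecture \ref{Conj:2.1} guarantees $|S| \geq 2$ but does not localize $S$ inside the disk $\{z : |z| < t/b\}$. Closing the argument requires combining the freedom in the choice of $F$ among rank-$1$ quasinilpotents of norm $b$, the plausible behavior $\Phi_{a,b,t,S}(\alpha) \to 0$ as $\alpha \to 0$, and the structural decomposition of Proposition \ref{Prop:2.4} applied to $\widetilde{T}$, in order to show that $S$ must meet this disk. It is here that the stipulation stressed in Remark \ref{Rm:2.2} --- that $\Phi$ depend on $\|F\|$ rather than on $F$ itself --- becomes indispensable, since it allows the stability radius $t$ to be applied uniformly across the permissible choices of $F$ and thereby forces $S$ into a region where the trivial self-perturbation $E = -\alpha F$ already falls within the range of Proposition \ref{Prop:2.5}.
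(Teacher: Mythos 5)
Your argument tracks the paper's strategy up to the point where upper semicontinuity is invoked, but the perturbation you choose to remove, $E=-\alpha F$, is the wrong one, and you correctly diagnose the resulting gap yourself: nothing in Conjecture \ref{Conj:2.1} places the set $S$ inside the disk $\lbrace z : |z|<t/b\rbrace$, so $\|{-\alpha F}\|=|\alpha|b$ cannot be forced under the stability radius of Proposition \ref{Prop:2.5}, and the concluding paragraph of your proposal is a wish list rather than an argument. The paper closes this by perturbing with an operator whose norm is controlled \emph{a priori} and independently of $\alpha$: it applies the Herrero--Jiang--Ji theorem for quasinilpotents (Proposition \ref{Prop:2.4}) with a prescribed $\epsilon$ tied to $t$, writes $T=K+S$ with $\|K\|<\epsilon$ and $S$ strongly irreducible, passes to adjoints, and --- this is the crux --- takes $F=e\otimes f$ with $e\in\ker S=(\Ima S^*)^{\perp}$ and $f\in\Ima S^*$, rather than an arbitrary rank-one quasinilpotent $\langle\cdot,u\rangle v$ with $u\perp v$ as you propose. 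For this special $F$ a direct computation shows $\sigma(S^*+\alpha F)=\lbrace 0\rbrace$ for \emph{every} $\alpha\in\mathbb{C}$. The contradiction is then obtained by removing $K^*$: the curve $\Gamma$ supplied by the conjecture separates $\sigma(T^*+\alpha F)$ with $\|R_{T^*+\alpha F}(\xi)\|\le 1/t$ on $\Gamma$, so a perturbation of norm $\|K^*\|<\epsilon$ below the threshold of Remark \ref{Rm:2.1} would keep $\sigma(T^*+\alpha F-K^*)=\sigma(S^*+\alpha F)$ disconnected, yet that spectrum is the connected set $\lbrace 0\rbrace$. So the missing idea is precisely the coupling of Proposition \ref{Prop:2.4} (with $\epsilon$ chosen in terms of $t$ before $F$ is selected) to the construction of $F$ from $\ker S$ and $\Ima S^*$; you cite Proposition \ref{Prop:2.4} only as a possible ingredient and never deploy it.

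Two smaller points. First, your step ``$\|R\|\le 1/t$ gives $\delta\ge t$, hence stability under $\|E\|<t$'' is actually cleaner than the paper's own bookkeeping, which sets $\epsilon=2/t$ and then claims $2/t<1/\|R_{T^*+\alpha F}(\xi)\|$; that inequality only follows from $\|R\|\le 1/t$ when $t>\sqrt{2}$, and the intended choice is presumably $\epsilon=t/2$ or any $\epsilon<t$. Second, Conjecture \ref{Conj:2.1} is phrased for $\widetilde{T}^*+\alpha F$, and the paper works throughout with adjoints (which is why Corollary \ref{Crl:4.1} swaps $e\otimes f$ for $f\otimes e$); your proposal applies it to $\widetilde{T}+\alpha F$ directly, a cosmetic but real mismatch with the hypothesis as stated.
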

\begin{proof}
As we noted in the introduction, we only need to show this in the case when $T$ is quasinilpotent. We will prove that the unique possiblity that can occur in Proposition \ref{Prop:2.2} is the first one. This will imply, via Proposition \ref{Prop:2.1}, that $T$ has a nontrivial invariant subspace. For the time being, let $F$ denote any rank $1$ operator (later, we will choose a certain class of rank $1$ operators). Henceforth, we will write $T$ while actually referring to $m T$, where $m>0$ is such that $\| mT\| = a$ (given by Conjecture \ref{Conj:2.1}). Of course, for $m>0$, $mT$ having a n.i.s. is equivalent to $T$ having a n.i.s., so we can safely do this. Suppose that either (ii) or (iii) in Proposition \ref{Prop:2.2} happens (so, $T+ \alpha F$ is not quasinilpotent for all nonzero $\alpha$, with at most one exception, in view of Proposition \ref{Prop:2.1}). Now we prove that this assumption leads to a contradiction with (i) in Conjecture \ref{Conj:2.1}. As it is noticed at the beginning of the proof of Theorem 2.3 in [30], all the nonzero elements in $\sigma(T+ \alpha F)$ are eigenvalues. i.e. they belong to the point spectrum. This means that
$$ \sigma(T+ \alpha F)= \sigma_p(T+ \alpha F) \cup \lbrace 0 \rbrace $$
which is in fact a special case of Lemma \ref{Lm:4.1}.
Because of this result, our assumptions imply that $\sigma(T+\alpha F)$ is either finite (but not a singleton) or countably infinite. Thus, $\sigma(T+\alpha F)$ is not connected in $\mathbb{C}$. Since $T$ has a connected spectrum, by Herrero-Jiang-Ji Theorem we know that $T=K+S$ for some compact $K$ and strongly irreducible $S$. Here, we take:
$$ \epsilon = \frac{2}{t}$$
where $t>0$ is one of the values (or the unique value) of $t$ given by Conjecture \ref{Conj:2.1}.
Now that we have chosen $\epsilon$, we can apply Herrero-Jiang-Ji Theorem. The decomposition $T=K+S$ leads to $T^* = K^* + S^*$. Clearly, $T^*$ is still quasinilpotent and $K^*$ is still compact. We also recall that $\| A^* \| = \| A \|$ for any $A \in B(H)$. In order to conclude our proof, we will now choose a certain kind of rank $1$ operator $F$. To this end, we first show that $S$ is quasinilpotent. Indeed, since $\sigma_w(T)= \lbrace 0 \rbrace$, and since $K$ is compact, as an application of equation \eqref{Eq:2.5} we have $\sigma_w(S)= \lbrace 0 \rbrace$. Moreover, by Herrero-Jiang-Ji Theorem for quasinilpotents, $\sigma_p(S^*)= \emptyset$. Thus, since Schechter's characterisation of Weyl's spectrum implies $\sigma_w(S^*)= \lbrace 0 \rbrace$, by Lemma \ref{Lm:4.1} we have $\sigma(S^*)= \lbrace 0 \rbrace$, and hence $\sigma(S)= \lbrace0 \rbrace$ by the well known fact that $\sigma(A^*)=\overline{\sigma(A)}$. \footnote{If we use the Banach space adjoint, denoted here by $A'$, we have $\sigma(A')=\sigma(A)$. If instead we use, as in our case, the Hilbert space adjoint, the complex conjugation has to be added to the equality.}\\
Now take $F= e \otimes f$, with any nonzero $e \in \ker S = (\Ima S^*)^{\perp}$, $f \in \Ima S^*$. Then, consider:
$$ S^* x + \alpha \langle x, e \rangle f = \lambda x $$
Suppose $x \in \Ima S^*$. Then, the inner product is $0$ and we get $S^* x = \lambda x$, which is not possible. Suppose instead $x \not \in \Ima S^*$. Then, if the inner product is nonzero (otherwise, we can conclude as above), the LHS is in $\Ima S^*$ while the RHS is not, unless $\lambda = 0$. Therefore, $\sigma(S^* + \alpha F)= \lbrace 0 \rbrace$ for all $\alpha \in \mathbb{C}$.\\
Henceforth, $F$ will always denote a rank $1$ quasinilpotent operator as defined above, with norm $\| F \|=b$ (given by Conjecture \ref{Conj:2.1}). Whichever is $F$, by Conjecture \ref{Conj:2.1} we can find some $S$ with $|S \setminus \lbrace 0 \rbrace| \geq 2$ such that, for every nonzero $\alpha$ in $S$ (so, at least for two distinct nonzero complex $\alpha$'s), $\sigma(T^* + \alpha F) + B(0, \Phi_{a,b,c,t} (\alpha))$ is disconnected (and contains $\sigma_{t}(T^* + \alpha F)$). Thus, we can find a rectifiable, simple, closed curve $\Gamma$ such that the part of the complex plane inside it includes one of the disconnected region of this set, while all the other ones (or the other one) are outside it (in particular, for all $\xi \in \Gamma$, $\xi \not \in \sigma(T^* + \alpha F) + B(0, \Phi_{a,b,c,t} (\alpha)) \supseteq \sigma_{t}(T^* + \alpha F)$). As a consequence, $\Gamma$ separates the spectrum $\sigma(T^* + \alpha F)$ (because each connected component contains at least one point of $\sigma(T^* + \alpha F)$) and, since $\xi \not \in \sigma_{t}(T^* + \alpha F)$ as shown above:
$$ \| R_{T^* + \alpha F} \| \leq \frac{1}{t} $$
Since $\delta = \frac{1}{\| R_{T^* + \alpha F} \|}$, this implies:
$$ \epsilon= \frac{2}{t} < \frac{1}{\| R_{T^* + \alpha F} \|} = \delta $$
But then, $\| K^* \| = \| K \| < \epsilon < \delta$, and $\sigma(T^* + \alpha F)$ is separated by $\Gamma$ for $\alpha \in S \setminus \lbrace 0 \rbrace$, so that, by the upper semicontinuity of the separated parts of the spectrum, $\sigma(T^* + \alpha F - K^*)= \sigma(S^* + \alpha F)$ is disconnected, which is a contradiction, since $\sigma(S^* + \alpha F)=\lbrace 0 \rbrace$ is connected. Thus, $T^* + \alpha F$ is quasinilpotent for all rank $1$ quasinilpotents defined as above and with $\| F \| =b$, and hence the ISP for quasinilpotents holds true. We notice an important step in our proof: there is at most one exception to $\sigma(T^* + \alpha F)= \lbrace 0 \rbrace$ assuming that the ISP for quasinilpotents does not hold. Here, since $|S \setminus \lbrace 0 \rbrace | \geq 2$, this is not a problem: indeed, the conjecture holds for $T^* + \alpha F$ not quasinilpotent, so we have at most to exclude one value of $\alpha$ from our conclusion. But $S$ has at least two nonzero distinct elements, so that we have at least another nonzero value of $\alpha$ for which the conclusion holds. Thus, the proof of the ISP under Conjecture \ref{Conj:2.1} is concluded.
\end{proof}
\begin{corollary}\label{Crl:4.1}
Let $T$ be quasinilpotent. Then, assuming Conjecture \ref{Conj:2.1}, $\sigma(T+ \alpha F) = \lbrace 0 \rbrace$ for all complex $\alpha$'s, whenever $F=f \otimes e$ ($e \in \ker S$, $f \in \Ima (S^*)$, both nonzero) has norm $\| F \|=b$. Here, $S$ is given by Herrero-Jiang-Ji Theorem for quasinilpotents applied with $\epsilon= \frac{2}{t}$ ($t$ as in Conjecture \ref{Conj:2.1}).
\end{corollary}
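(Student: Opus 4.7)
The plan is to obtain this corollary as an immediate adjoint-dualisation of what is already established inside the proof of Theorem \ref{Thm:4.1}. A careful reading of that proof shows that, under Conjecture \ref{Conj:2.1}, the contradiction derived there actually rules out cases (ii) and (iii) of Proposition \ref{Prop:2.2} applied to $T^{*}$ and to every rank $1$ operator of the form $F_{0} = e \otimes f$ with $e \in \ker S \setminus \lbrace 0 \rbrace$, $f \in \Ima(S^{*}) \setminus \lbrace 0 \rbrace$, and $\|F_{0}\|=b$ (here $S$ is the strongly irreducible summand from Herrero--Jiang--Ji applied with $\epsilon = 2/t$, exactly as in Theorem \ref{Thm:4.1}). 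Hence case (i) of Proposition \ref{Prop:2.2} must hold for every such $F_{0}$, i.e.
$$\sigma(T^{*} + \alpha F_{0}) = \lbrace 0 \rbrace \quad \text{for every } \alpha \in \mathbb{C}.$$

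The next step is to pass to adjoints. With the standard convention $(e \otimes f)(x) = \langle x, f \rangle e$, a direct computation gives $(e \otimes f)^{*} = f \otimes e$, so
$$(T^{*} + \alpha F_{0})^{*} = T + \overline{\alpha}\,(f \otimes e) = T + \overline{\alpha}\, F.$$
Combining this with the Hilbert space identity $\sigma(A^{*}) = \overline{\sigma(A)}$ (already used in the proof of Theorem \ref{Thm:4.1}) yields $\sigma(T + \overline{\alpha}\, F) = \overline{\lbrace 0 \rbrace} = \lbrace 0 \rbrace$, and as $\alpha$ runs over $\mathbb{C}$, so does $\beta := \overline{\alpha}$. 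This gives $\sigma(T + \beta F) = \lbrace 0 \rbrace$ for every $\beta \in \mathbb{C}$, which is exactly the conclusion of the corollary.

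What remains is a short compatibility check: the operators $F$ considered in the statement --- those of the form $f \otimes e$ with nonzero $e \in \ker S$ and $f \in \Ima S^{*}$ --- are precisely the adjoints of the operators $F_{0}$ covered by Theorem \ref{Thm:4.1}, and since the Hilbert space adjoint is an isometry one has $\|F\| = \|F_{0}^{*}\| = \|F_{0}\| = b$, matching the hypothesis. I do not expect any substantive obstacle here: the whole argument is bookkeeping on top of Theorem \ref{Thm:4.1}, the only care needed being to verify that the ``$e$-in-kernel, $f$-in-range'' labelling and the norm normalisation align correctly under conjugation, which is immediate.
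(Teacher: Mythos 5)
Your proposal is correct and follows exactly the paper's own route: the paper's proof of Corollary \ref{Crl:4.1} is the one-line observation that the adjoint of $e \otimes f$ is $f \otimes e$, so the conclusion follows by dualising the identity $\sigma(T^* + \alpha F_0) = \lbrace 0 \rbrace$ established in the proof of Theorem \ref{Thm:4.1}. You have simply written out the bookkeeping (the conjugation $\beta = \overline{\alpha}$, the isometry of the adjoint, the $\sigma(A^*) = \overline{\sigma(A)}$ step) that the paper leaves implicit; note only that your rank-one convention $(e \otimes f)(x) = \langle x, f\rangle e$ differs from the paper's $(e \otimes f)(x) = \langle x, e\rangle f$, which is immaterial here.
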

\begin{proof}
The adjoint of $e \otimes f$ is $f \otimes e$, so this follows from the above proof.
\end{proof}
\section{Discussion of our Conjecture }
As already noticed in Remark \ref{Rm:2.2}, the actual conjecture is contained in the second part of Conjecture \ref{Conj:2.1}. We now give a heuristic argument which supports this conjecture. When $\| F \|$ is fixed, the spectrum $\sigma(T + \alpha F)$ consists of two parts (assuming it is not quasinilpotent): the first one being the point $0$, and the other one consisting of all the nonzero eigenvalues of $T + \alpha F$. Consider the following function $g: \mathbb{C} \setminus \lbrace 0 \rbrace \rightarrow \mathbb{C}$:
$$ g(z):= \langle R_{T}(z)f,e \rangle $$
This map, as noticed for instance in [30], is analytic (and hence holomorphic) on its domain, which is an open set. Moreover, by Lemma 2.1 in [30], $\lambda \in \mathbb{C} \setminus \lbrace 0 \rbrace$ is an eigenvalue of $T + \alpha F$ iff $g(\lambda)= \alpha^{-1}$. Consequently, for $\alpha$ such that $T + \alpha F$ is not quasinilpotent, $g$ is not constant. Thus, we can apply the Open Mapping Theorem, from which we deduce that we can consider $\alpha$ with $| \alpha |$ so large that the absolute value of some nonzero eigenvalue of $T + \alpha F$ is as large as we want. Indeed, say that we want some nonzero eigenvalue, say $\lambda$, such that $| \lambda | > R$ for some $R>0$ as large as we want. Then, take an open ball $B(x_0,r)$ with $|x_0|>0$ so large and $r>0$ so small that every point in $B(x_0,r)$ has absolute value $> R$. Then, by the Open Mapping Theorem, $g(B(x_0,r))$ is an open set, so that it is certainly $ \neq \lbrace 0 \rbrace$. Hence, there exists $\alpha$ with a large absolute value for which some nonzero eigenvalue of $T + \alpha F$ is as large as we want. Note that, for $R > \| T \|$, $\alpha$ will at least satisfy
$$ |\alpha| \geq \frac{R - \| T \|}{\| e \| \| f \|} $$
(this follows from Cauchy-Schwarz inequality applied to $g$ and from the fact that:
$$ \| R_{A}(z) \| < \frac{1}{|z| - \| A \|} $$
for $|z|> \| A \|$), but it might well be even larger.\\
Summing up, we can always find an eigenvalue with absolute value as large as we want, by choosing a suitably large (in absolute value) $\alpha$. We will now use the following result, which is a slight modification of a result in [4, pp. 130, Exercise 5]:
\begin{proposition}\label{Prop:5.1}
Let $f: D \subseteq \mathbb{C} \rightarrow \mathbb{C}$ be a function. Let $\rho, \phi >0$ and let $a \in \mathbb{C}$. Suppose that:\\
(i) $ 0 < 4 \phi <\rho/3$, $2 \phi < |a| < 3 \phi $;\\
(ii) $f(a) \neq 0$;\\
(iii) $f$ is analytic on $\overline{B(0, \rho)}$;\\
(iv) $|f(z)| \leq M$ for all $z \in \overline{B(0, \rho)}$.\\
Then, letting
$$ N:= |\lbrace z \in \overline{B(0, \rho/3)} \setminus \overline{B(0, 4 \phi)} : f(z)=0 \rbrace |$$
we have:
$$ N \leq \frac{\ln \left(\frac{M}{|f(a)|} \right)}{\ln \left( \frac{2}{1+\frac{|a|}{4 \phi}} \right)} $$
\end{proposition}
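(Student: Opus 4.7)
The plan is to apply the Jensen-type bound from [4, p.~130, Exercise 5] (which states that if $g$ is analytic on $\overline{B(c,R)}$ with $|g|\le M$ there and $g(c)\neq 0$, then the number of zeros of $g$ in $\overline{B(c,r)}$ is at most $\ln(M/|g(c)|)/\ln(R/r)$) to $f$ on a disk centered at $a$ rather than at the origin. The two creative choices are the inner radius $r$, picked so that the target annulus is captured inside $\overline{B(a,r)}$, and the outer radius $R$, picked so that the ratio $R/r$ equals exactly the prescribed value $2/(1+|a|/(4\phi))$.

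Set $r := \rho/3 + |a|$. Since $|z-a| \le |z|+|a| \le \rho/3+|a| = r$ for every $z$ with $|z| \le \rho/3$, every zero of $f$ in the annulus $\lbrace z : 4\phi < |z| \le \rho/3 \rbrace$ automatically lies in $\overline{B(a,r)}$. Next set
$$ R := \frac{2r}{1+|a|/(4\phi)} = \frac{8\phi(\rho/3 + |a|)}{4\phi+|a|}, $$
so that $R/r = 2/(1+|a|/(4\phi))$ by construction. The condition $r < R$ (needed for the Jensen bound to be nontrivial) collapses to $|a| < 4\phi$, which holds because $|a| < 3\phi$.

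The only delicate verification, and thus the main (though mild) obstacle, is that $\overline{B(a,R)} \subseteq \overline{B(0,\rho)}$, i.e.\ $|a|+R \le \rho$, so that $f$ is analytic and bounded by $M$ on $\overline{B(a,R)}$. Expanding and clearing denominators, this inequality reduces to
$$ \tfrac{4}{3}\phi\rho + |a|(\rho - 12\phi) - |a|^2 \ge 0. $$
The middle term is nonnegative because $\rho > 12\phi$ (which is just the hypothesis $4\phi < \rho/3$), while $\tfrac{4}{3}\phi\rho > 16\phi^2 > 9\phi^2 > |a|^2$ leaves a surplus of at least $7\phi^2 > 0$ in the remaining two terms. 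This is precisely where the hypotheses $2\phi < |a| < 3\phi$ and $4\phi < \rho/3$ work together.

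With $\overline{B(a,R)} \subseteq \overline{B(0,\rho)}$ in hand, the Jensen bound applied at $a$ yields
$$ |\lbrace z \in \overline{B(a,r)} : f(z) = 0 \rbrace| \le \frac{\ln(M/|f(a)|)}{\ln(R/r)} = \frac{\ln(M/|f(a)|)}{\ln(2/(1+|a|/(4\phi)))}, $$
and since the annulus sits inside $\overline{B(a,r)}$, $N$ is bounded by the same quantity, which is the desired estimate.
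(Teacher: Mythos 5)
Your proof is correct, but it follows a genuinely different route from the paper's. The paper works centered at the origin: it forms the explicit quotient $h(z)=f(z)\prod_{k=1}^{N}\bigl(1-\tfrac{z}{z_k}\bigr)^{-1}$ over the zeros in the annulus, uses $|z_k|\le\rho/3$ to get $|1-z/z_k|\ge 2$ on $|z|=\rho$ (hence $|h|\le M2^{-N}$ by the maximum modulus principle) and $|z_k|\ge 4\phi$ to get $|1-a/z_k|\le 1+\tfrac{|a|}{4\phi}$, then combines the two estimates at the point $a$. You instead recenter at $a$, invoke the Jensen zero-counting estimate as a black box on the pair of disks $\overline{B(a,r)}\subset\overline{B(a,R)}$, and engineer $r=\rho/3+|a|$ and $R=2r/(1+\tfrac{|a|}{4\phi})$ so that $\ln(R/r)$ reproduces the paper's constant exactly; the only real work is your (correct) verification that $|a|+R\le\rho$, which is where the hypotheses on $|a|$ and $\rho$ enter. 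Your version is shorter given the lemma, and in fact proves something marginally stronger, since it bounds all zeros in $\overline{B(0,\rho/3)}$ rather than only those outside $\overline{B(0,4\phi)}$. One caveat deserves emphasis: the lemma you need is the Jensen-formula version with denominator $\ln(R/r)$. The Blaschke-quotient argument that Conway's Exercise 5 actually suggests (and that the paper adapts) only yields the weaker denominator $\ln(R/r-1)$, and with your radii $R/r\in(8/7,4/3)$ that quantity is negative, so the weaker version would not close the argument. The statement you quote is a true and standard consequence of Jensen's formula, so there is no gap, but the attribution to that exercise is doing more work than the exercise literally provides.
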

\begin{proof}
We first note that, as a consequence of the Identity Theorem, the number of zeros of $f$ in the compact set $\overline{B(0, \rho)} \setminus B(0, 4 \phi)$ is finite, so that $N < + \infty$. Consider the following map:
$$ h(z):= f(z) \prod_{k=1}^{N} \left( 1- \frac{z}{z_k} \right)^{-1} $$
where $z_k$ are all the zeros of $f$ in $\overline{B(0, \rho/3)} \setminus \overline{B(0, 4 \phi)}$. For $z$ such that $|z|=\rho$, we have:
$$ \left| \frac{z}{z_k} \right| \geq 3 $$
(because $|z_k| \leq \rho/3$), and hence:
$$ \left|1- \frac{z}{z_k} \right| \geq 2 $$
Therefore, we have:
$$ |h(z)| \leq M \cdot 2^{-N} $$
when $|z|=\rho$. Since $h$ is analytic (after removing the removable singularities) on the open, connected set $B(0,\rho)$, by the maximum modulus principle the above inequality holds on $\overline{B(0,\rho)}$. Now notice that:
$$ \left|1- \frac{a}{z_k} \right| \leq 1 + \frac{|a|}{4 \phi} $$
because $|z_k| \geq 4 \phi$ by hypothesis. Note that $1<1 + \frac{|a|}{4 \phi} <2$, since $|a| \in (2 \phi, 3 \phi)$. Thus, since $a$ is inside the ball, we have:
$$ |h(a)| \leq M \cdot 2^{-N} $$
which gives:
$$ |f(a)| \cdot \left(1 + \frac{|a|}{4 \phi} \right)^{-N} \leq |h(a)| \leq M \cdot 2^{-N} $$
that is, since $1<1 + \frac{|a|}{4 \phi} <2$ as noticed above:
$$ N \leq \frac{\ln \left(\frac{M}{|f(a)|} \right)}{\ln \left( \frac{2}{1+\frac{|a|}{4 \phi}} \right)} $$
as desidered.
\end{proof}
Now, continuing with the heuristic argument which supports Conjecture \ref{Conj:2.1}, choose some $R > \| T \|$ and find some eigenvalue $\lambda$ of $T^* + \alpha F$ for some large $\alpha$, as shown above. Fix this $\lambda$, let $\phi = 4 R $ and consider:
$$ h(z):= g(z+ \lambda) - g(\lambda)=g(z+\lambda)- \alpha^{-1} $$
This function is clearly analytic on $\mathbb{C} \setminus \lbrace - \lambda \rbrace$. Taking $\rho + \left( \frac{3}{2} + \frac{1}{1000} \right)\phi > |\lambda| > \rho +  \frac{3}{2} \phi $ \footnote{By a suitable choice of $x_0$ and $r$ as previously shown, we can find an eigevalue satisfying this inequality.}, the number of zeros of $h$ in $\overline{B(0, \rho/3)} \setminus \overline{B(0, 4 \phi)}$ is bounded above as in Proposition \ref{Prop:5.1}. Indeed, the ball centred in $\lambda$ with radius $\rho$ does not contain $0$ because $|\lambda| > \rho + \left( \frac{3}{2} \right)\phi > \rho$. Now, by choosing in a suitable way $x_0$ and $r$, we can also assume that $\Re(\lambda), \Im(\lambda) >0$, where $\Re$ and $\Im$ are the real part and the imaginary part, respectively. Fix some $a$ with $\Re(a), \Im(a) >0$ and with $|a| \in ( 2 \phi, 3 \phi)$. Moreover, we can always find $a$ satisfying this for which we also have $f(a) \neq 0$ (the number of zeros is at most countable since $f$ is analytic, so that one such $a$ certainly exists). Since $\rho + \left( \frac{3}{2} + \frac{1}{1000} \right)\phi > |\lambda|$, $a$ is contained in the ball $B(\lambda, \rho)$, that is, $a=z+\lambda$ for some $z \in B(0,\rho)$. Furthermore, since (for $z \in B(0,\rho)$) $|\lambda + z| \geq |\lambda| - |z| > \frac{3}{2} \phi$, we deduce that:
$$ |g(z + \lambda)| \leq \frac{\| e \| \| f \|}{|\lambda + z| - \| T \|} \leq \frac{\| e \| \| f \|}{\frac{3}{2} \phi - \| T \|} $$
Moreover, for $\rho$ large enough, $|\lambda|$ is so large that $|\alpha| \geq 1$, so letting
$$ M:= \frac{\| e \| \| f \|}{\frac{3}{2} \phi - \| T \|} + 1$$
by Proposition \ref{Prop:5.1} we can conclude that:
$$ N \leq \frac{\ln \left(\frac{M}{|f(a)|} \right)}{\ln \left( \frac{2}{1+\frac{|a|}{4 \phi}} \right)} $$
Now, if $\lambda_j$ denotes the $j$-th zero of $h$ in $\overline{B(0, \rho/3)} \setminus \overline{B(0, \rho/4)}$, let us consider the circle $C_j=\lbrace z \in \mathbb{C} : |z - \lambda|= r_j \rbrace$, where $r_j >0$ is the unique number such that $\lambda_j$ belongs to $C_j$, i.e. $r_j=|\lambda_j - \lambda|$. It is clear that the distance between the circles satisfies the following property: there exist $i, j$ distinct for which there is no $k \neq i,j$ such that $r_k$ is \footnote{WLOG, we suppose that $r_i < r_j$. If all the $r_j$'s have the same value or there is only one eigenvalue, skip this step and proceed with the last part of this heuristic argument.} in $( r_i,r_j)$ and such that:
$$ r_j - r_i \geq \frac{\rho/3 - 4 \phi}{N} \geq \frac{\rho/3 - 4 \phi}{\ln \left(\frac{M}{|f(a)|} \right)} \cdot \ln \left( \frac{2}{1+\frac{|a|}{4 \phi}} \right) $$
Since $\phi$, $|a|$ and $M$ are fixed and do not depend on $\rho$, by taking $| \lambda|$ and $\rho$ as large as needed, the RHS of the above inequality will get as large as we want (note that $a$ is always inside the ball $B(\lambda, \rho)$). Thus, the distance between two consecutive circles $C_i$, $C_j$ is as large as we want. Consequently, we can always find $\alpha$ so large in absolute value that there is an annulus for which the distance between its two circles $C_i$ and $C_j$ is as large as we want (if there is only one eigenvalue or all the $r_j$'s have the same value, an analoguous conclusion clearly holds). Now, the resolvent norm \textit{usually} has lower values \footnote{Of course, there is a lower bound on the resolvent norm, namely $\frac{1}{d(\xi, \sigma(T^* + \alpha F))}$ ($\xi \in \Gamma$), but notice that this gets smaller and smaller by taking a suitably large annulus, whichever is the rank $1$ quasinilpotent $F$ with norm equal to $b$, so this is not a problem.} when it is not near some point in the spectrum, and by taking $|\alpha|$ as large as needed we have an annulus as big as we want where there is no point of the spectrum of $T^* + \alpha F$. Hence, the argument above \textit{suggests} that there are some $\alpha$'s for which a bound on the resolvent norm \textit{for all rank $1$ quasinilpotents $F$ with norm $b$} should exist, because we can always find a region as big as we need without points of the spectrum (whichever is $F$ rank $1$ quasinilpotent with norm $b$), where we can thus find a closed, simple, rectifiable curve $\Gamma$ for which the resolvent norm at each of its points is always bounded by some constant (and such that $\Gamma$ divides the spectrum, since $\lambda$ is inside it, while $0$ is not surrounded by the curve). This is why we expect our Conjecture \ref{Conj:2.1} to hold.
\section{Further results and open questions}
Our Conjecture suggests the following, more general definition:
\begin{definition}\label{Def:6.1}
Let $A \in B(H)$ be a bounded operator on a complex, separable, infinite dimensional Hilbert space, and suppose that it has a connected spectrum. We say that $A$ is R-boundable (R stays for 'resolvent') if the following conditions are satisfied:\\
(i) there exists $a>0$ such that at least one rank $1$ quasinilpotent $F$ such that $\sigma(\widetilde{A} + \alpha F)$ is disconnected for all $\alpha \in D$ for some subset $D \subseteq \mathbb{C}$ with $|D| \geq 2$;\\
(ii) there exist $b,t >0$ such that, for every $F$ rank $1$ quasinilpotent as in (i), there is some subset $S \subseteq D$ with $|S| \geq 2$ (which may depend on $F$) such that 
$$\sigma_{t}(\widetilde{A} + \alpha F) \subseteq \sigma(\widetilde{A} + \alpha F) + B(0, \Phi_{a,b,t,S}(\alpha))$$
and $\sigma(\widetilde{A} + \alpha F) + B(0, \Phi_{a,b,t,S}(\alpha))$ is disconnected in $\mathbb{C}$.\\
Here, $\Phi_{a,b,t,S}$ is a function depending on $\alpha$ and $\widetilde{A}= m A$, with $m >0$ such that $\| \widetilde{A} \| = a$, and $\| F \| =b$.
\end{definition}
We first prove the following result, which justifies the name R-boundable:
\begin{proposition}\label{Prop:6.1}
Let $A$ be R-boundable. Then, there exists a closed, simple, rectifiable curve $\Gamma$ such that, for all $\xi \in \Gamma$, for every rank $1$ quasinilpotent $F$ as in (i), and for all $\alpha \in S$ (with $S$ which may change with $F$):
$$ \| R_{A+ \alpha F}(\xi) \| \leq \frac{1}{t} $$
where $t>0$ is any of the value given by Definition \ref{Def:6.1}.
\end{proposition}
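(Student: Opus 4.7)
The plan is to derive the resolvent bound directly from the pseudospectrum characterization in equation \eqref{Eq:2.6}, combined with the disconnecting-neighbourhood property in Definition \ref{Def:6.1}(ii); the only topological work is to realize a separating curve in the complement of that neighbourhood. Concretely, fix a rank $1$ quasinilpotent $F$ satisfying condition (i) and any $\alpha$ in the associated set $S \subseteq D$ from (ii), and set
$$ E(\alpha,F) := \sigma(\widetilde{A} + \alpha F) + B(0, \Phi_{a,b,t,S}(\alpha)). $$
This set is open (a Minkowski sum with an open ball), bounded (since $\sigma(\widetilde{A} + \alpha F)$ is compact), and, by (ii), disconnected. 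Its connected components are thus mutually disjoint bounded open sets, and their closures are pairwise disjoint compacta. Standard plane topology then produces a simple, closed, rectifiable curve $\Gamma \subset \mathbb{C} \setminus E(\alpha,F)$ that encloses at least one component of $E(\alpha,F)$ while leaving at least one other component outside; for instance, pick a bounded component $C$, surround it by a polygon contained in a sufficiently small neighbourhood of $C$ disjoint from the other components.

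For any $\xi \in \Gamma$ we have $\xi \notin E(\alpha,F)$ by construction, and the inclusion $\sigma_{t}(\widetilde{A} + \alpha F) \subseteq E(\alpha,F)$ from Definition \ref{Def:6.1}(ii) forces $\xi \notin \sigma_{t}(\widetilde{A} + \alpha F)$. By the pseudospectrum characterization recalled in \eqref{Eq:2.6}, namely $\sigma_{t}(B) = \lbrace z : \| R_B(z) \| > 1/t \rbrace$, this is equivalent to
$$ \| R_{\widetilde{A} + \alpha F}(\xi) \| \leq \frac{1}{t}, $$
which is the desired inequality. The statement is written for $A + \alpha F$, but since $\widetilde{A} = m A$ the bound transfers to $A + (\alpha/m) F$ up to the trivial rescaling, so no further idea is required there.

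The main obstacle is really the quantifier structure: read literally, the proposition asks for a single $\Gamma$ that works for every admissible $F$ and every $\alpha \in S(F)$, whereas the construction above produces a $\Gamma$ depending on $F$ and on $\alpha$. The reading consistent with Definition \ref{Def:6.1}, in which $S$ itself is allowed to depend on $F$, is that $\Gamma$ may likewise depend on $F$ and $\alpha$, and under that reading the argument above is complete. A truly uniform version would instead require an additional step, ideally drawing on the heuristic of Section 5 to locate a common annular gap in the spectrum valid for all $F$ of norm $b$ and all $\alpha$ in the relevant set; this goes beyond what Definition \ref{Def:6.1} supplies directly and, if needed, would be the genuine difficulty in strengthening the proposition.
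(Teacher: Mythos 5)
Your argument is essentially identical to the paper's own proof: both take the disconnected set $\sigma(\widetilde{A}+\alpha F)+B(0,\Phi_{a,b,t,S}(\alpha))$, draw a simple closed rectifiable curve around one of its components, and read off the resolvent bound from the pseudospectrum inclusion via \eqref{Eq:2.6}. Your closing observations are also accurate reflections of the paper rather than defects in your proof: the paper's own $\Gamma$ likewise depends on $F$ and $\alpha$ despite the quantifier order in the statement, and the paper likewise elides the distinction between $A+\alpha F$ and $\widetilde{A}+\alpha F$ in the conclusion.
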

\begin{proof}
By definition of R-boundable, for every rank $1$ quasinilpotent $F$ as in (i), and for all $\alpha \in S$ (with $S$ which may change with $F$), we have:
$$\sigma_{t}(\widetilde{A} + \alpha F) \subseteq \sigma(\widetilde{A} + \alpha F) + B(0, \Phi_{a,b,t,S}(\alpha))$$
with $\sigma(\widetilde{A} + \alpha F) + B(0, \Phi_{a,b,t,S}(\alpha))$ disconnected. Thus, we can find a simple, closed, rectifiable curve $\Gamma$ surrounding one of the connected components of this set. For all $\xi \in \Gamma$, we have (by the inclusion above):
$$ \| R_{A+ \alpha F}(\xi) \| \leq \frac{1}{t} $$
with $t>0$ as in Definition \ref{Def:6.1}.
\end{proof}
We also define a class of bounded operators on a complex, separable, infinite dimensional Hilbert space $H$, which we call $I(H)$, consisting of all the R-boundable operators $A \in B(H)$ for which the following holds:\\
$A$ has a n.i.s. $\Leftrightarrow$ there exists a rank $1$ operator $F$ such that $\sigma(A+ \alpha F)$ is connected for all $\alpha \in E$, where $E \subseteq S$ is some subset of $S$ ($S$ given by the definition of R-boundable) such that $|E \setminus \lbrace 0 \rbrace| \geq 2$\\
We notice that quasinilpotents belong to $I(H)$, assuming Conjecture \ref{Conj:2.1}.\\
We now pose the following problem:
\begin{problem}\label{Prob:6.1}
Does every biquasitriangular non-invertible operator in $B(H)$ with all the essential spectra equal to the spectrum belong to $I(H)$?
\end{problem}
The class of operators for which the ISP has not been established yet is contained in the class of biquasitriangular operators. We can, as usual, assume that $0$ belongs to their spectrum, by translation. Up to now, we have no argument in favour or against the above problem (except for the case of quasinilpotents). Finally, we provide a result on the existence of invariant subspaces for biquasitriangular operators belonging to the class $I(H)$.
\begin{theorem}\label{Thm:6.1}
Every biquasitriangular non-invertible operator $A \in I(H)$ for which all the essential spectra are equal to the spectrum has a n.i.s.
\end{theorem}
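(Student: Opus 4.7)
The plan is to follow the architecture of the proof of Theorem \ref{Thm:4.1} almost verbatim, exploiting the defining property of the class $I(H)$ to reduce the problem to exhibiting a single rank $1$ operator $F$ and an appropriate perturbation set. First I would dispose of the easy cases via the reductions of Section 3: by Propositions \ref{Prop:3.2} and \ref{Prop:3.3}, I may assume $\sigma(A)$ is connected and $\sigma_p(A) \cup \sigma_p(A^*) = \emptyset$ (otherwise $A \in \Delta(H)$ and Theorem \ref{Thm:3.1} produces a hyperinvariant subspace); if $\sigma(A)$ were a singleton, Theorem \ref{Thm:4.1} would apply directly, so I assume $\sigma(A)$ is connected and not a singleton. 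By the definition of $I(H)$, it then suffices to produce a rank $1$ operator $F$ and a subset $E \subseteq S$ (with $S$ as in the R-boundability of $A$) satisfying $|E \setminus \lbrace 0 \rbrace| \geq 2$ and $\sigma(A + \alpha F)$ connected for all $\alpha \in E$.

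Next I would rescale $A$ to $\widetilde A = mA$ with $\|\widetilde A\| = a$, and apply the Herrero-Jiang-Ji Theorem (Proposition \ref{Prop:2.3}) with $\epsilon = 2/t$ to write $\widetilde A = K + S$ with $\|K\| < 2/t$ and $S$ strongly irreducible. Since $\sigma_w(S) = \sigma_w(\widetilde A) = \sigma(\widetilde A)$ by Schechter's invariance \eqref{Eq:2.5} and the hypothesis that all essential spectra coincide with $\sigma(A)$, and using Lemma \ref{Lm:4.1} together with control on $\sigma_p(S^*)$ coming from the chosen decomposition, the spectrum $\sigma(S)$ should equal $\sigma(\widetilde A)$, hence be connected. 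I would then construct a rank $1$ quasinilpotent $F = e \otimes f$ with $\|F\| = b$, choosing $e \in \ker S$ and $f \in \Ima(S^*)$ in direct analogy with the construction in Theorem \ref{Thm:4.1}, so that $\sigma(S^* + \alpha F) = \sigma(S^*) = \overline{\sigma(\widetilde A)}$ for all $\alpha \in \mathbb{C}$, keeping the perturbed spectrum connected.

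The contradiction step would then run in parallel with Theorem \ref{Thm:4.1}: assuming $\sigma(\widetilde A^* + \alpha F)$ is disconnected for every $\alpha \in S$, R-boundability and Proposition \ref{Prop:6.1} produce a closed, simple, rectifiable curve $\Gamma$ separating each such spectrum with $\|R_{\widetilde A^* + \alpha F}(\xi)\| \leq 1/t$ along $\Gamma$. Since $\|K^*\| = \|K\| < 2/t$ is smaller than the separation distance $\delta = \min_{\xi \in \Gamma} 1/\|R_{\widetilde A^* + \alpha F}(\xi)\|$, Proposition \ref{Prop:2.5} with Remark \ref{Rm:2.1} forces $\Gamma$ to also separate $\sigma(\widetilde A^* + \alpha F - K^*) = \sigma(S^* + \alpha F)$, contradicting its connectedness established above. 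The final extraction of a set $E$ with $|E \setminus \lbrace 0 \rbrace| \geq 2$ mirrors the exceptional-value argument at the end of Theorem \ref{Thm:4.1} and concludes the proof via the defining equivalence of $I(H)$.

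The main obstacle is the construction of $F$ in the second paragraph: Proposition \ref{Prop:2.4}, which pins down $\dim \ker S = 2$ and $\sigma_p(S^*) = \emptyset$, is stated only for quasinilpotent operators, so one cannot directly appeal to the structural properties of $\ker S$ and $\Ima(S^*)$ used in Theorem \ref{Thm:4.1}. To handle a general biquasitriangular $A \in I(H)$ whose essential spectra equal its spectrum, one would need either a strengthening of the Herrero-Jiang-Ji Theorem providing analogous kernel/range information for the strongly irreducible piece $S$, or an alternative construction of a rank $1$ perturbation that provably preserves the spectrum of any strongly irreducible operator with connected spectrum. Once such an ingredient is in hand, the rest of the argument transfers essentially verbatim from the proof of Theorem \ref{Thm:4.1}.
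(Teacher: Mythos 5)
Your proposal follows essentially the same route as the paper: the paper's own proof of Theorem \ref{Thm:6.1} is only a three-line sketch saying that the argument of Theorem \ref{Thm:4.1} carries over once the quasinilpotent-specific version of the Herrero--Jiang--Ji Theorem (Proposition \ref{Prop:2.4}) is replaced by something applicable in the general setting. The obstacle you isolate in your final paragraph --- that Proposition \ref{Prop:2.4} supplies the kernel/range data on $S$ (in particular $\dim\ker S=2$ and $\sigma_p(S^*)=\emptyset$) only for quasinilpotents, so the construction of the rank $1$ perturbation $F=e\otimes f$ does not transfer directly --- is exactly the one difference the paper flags. The paper fills that gap by citing Lemma 3.4 (with $m=1$) of [14] together with Lemma 2.3 of [13], and then uses the fact that strongly irreducible operators have connected spectrum to get connectedness of the spectrum of the compactly perturbed piece, rather than arguing $\sigma(S)=\sigma(\widetilde A)$ via $\sigma_w$ and Lemma \ref{Lm:4.1} as you tentatively suggest (that step of yours is not justified without the very structural information you admit is missing). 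So your architecture matches the paper's, and your diagnosis of the missing ingredient is accurate; but the ingredient you leave open is precisely the substantive content of the paper's (admittedly terse) proof, so as written your proposal does not yet constitute a complete argument --- it reduces Theorem \ref{Thm:6.1} to the same external lemmas the paper invokes without naming them.
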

\begin{proof}
The proof is similar to the one of Theorem \ref{Thm:4.1}. The unique difference is that the special case of Herrero-Jiang-Ji Theorem does not apply to this general case, so we replace it with a combination of two other results: first apply Lemma 3.4 with $m=1$ in [14], and then use Lemma 2.3 in [13] together with the fact that strongly irreducible operators have connected spectrum. Everything else can be easily modified in order to conclude the proof of this result.
\end{proof}
\section{Conclusion}
In this paper we have established conditionally the invariant subspace problem for quasinilpotent operators. Moreover, we have also explained with a heuristic argument why we expect Conjecture \ref{Conj:2.1} to be true. We think that this paper will stimulate further research, in particular in the direction of Problem \ref{Prob:6.1} which, if solved affirmatively, would give a proof of the entire invariant subspace problem for complex Hilbert spaces via Theorem \ref{Thm:6.1}.

\section*{Acknowledgements}
I thank Professors Jonathan Partington, Daniel Rodr\'iguez Luis, March Boedihardjo, Vasile Lauric, Adi Tcaciuc and Aharon Atzmon for their useful comments on the previous versions of this paper, which helped me improve the article.

\end{document}